\newtheorem{theorem}{Theorem}[section]
\newtheorem{lemma}[theorem]{Lemma}
\newtheorem{definition}[theorem]{Definition}
\theoremstyle{definition}
\numberwithin{equation}{section}
\def\be{\begin{equation}}
\def\ee{\end{equation}}
\def\diam{\text{ diam}}
\def\dist{\text{ dist}}
\def\ra{\rightarrow}
\def\cra{\curvearrowright}
\def\ds{\text{ ds}}
\def\dt{\text{ dt}}
\newcounter{alphabet}
\newcommand*{\rom}[1]{\expandafter\@slowromancap\romannumeral #1@}
\begin{document}
\bibliographystyle{amsplain}
\title{Uniformization of intrinsic Gromov hyperbolic spaces}
\author[Vasudevarao Allu]{Vasudevarao Allu}
\address{Vasudevarao Allu,Department of Mathematics, School of Basic Sciences, 
Indian Institute of Technology Bhubaneswar,
Bhubaneswar-752050, 
Odisha, India.}
\email{avrao@iitbbs.ac.in}
\author[Alan P Jose]{Alan P Jose}
\address{Alan P Jose, Department of Mathematics, School of Basic Sciences, Indian Institute of Technology Bhubaneswar,
Bhubaneswar-752050, Odisha, India.}
\email{alanpjose@gmail.com}
\subjclass[2020]{Primary 30C65, 30F45; Secondary 30C20. }
\keywords{Gromov hyperbolicity, Uniformization, Uniform spaces}

\begin{abstract}
 The purpose of this paper is to provide a uniformization procedure for Gromov hyperbolic spaces, which need not be geodesic or proper. 
We prove that the conformal deformation of a Gromov hyperbolic space is a bounded uniform space. 
Further, we show that there is a natural quasi-isometry between the Gromov boundary and the metric boundary of the deformed space. 
Our main results are a generalization of the results of Bonk, Heninonen, and Koskela [Proposition 4.5, Proposition 4.13, Ast\'{e}risque 270 (2001)].
\end{abstract}

\maketitle
\section{Introduction}
Around 1980's, Mikhail Gromov introduced the notion of "negative curvature" for a general metric space in the sense of coarse geometry, which is now known as Gromov hyperbolic spaces. 
Even though, the concept was conceived in the setting of geometric group theory, it has played an increasing role in the analysis of general metric spaces. 
Bonk, Heinonen, and Koskela \cite{BHK} have established a fundamental two-way correspondence between uniform metric spaces and proper geodesic Gromov hyperbolic spaces. 
In the context of Euclidean spaces, this result can be seen as a generalization of the Riemann mapping theorem in 
$\mathbb{C}$, as the unit disk is a uniform domain. 
Thus, in this sense, Gromov hyperbolic spaces can be viewed as an appropriate counterpart to simply connected domains in the plane. 

Inspired from the well-known properties of a hyperbolic geodesic in the unit disk, Martio and Sarvas \cite{martio_sarvas_} have introduced the class of uniform domains in Euclidean spaces. 
Bonk, Heinonen, and Koskela \cite{BHK} have introduced the concept of uniform metric spaces and established the relationship between Gromov hyperbolicity and uniform metric spaces. They obtained the following result:
\begin{theorem}\label{bhk_result}
The conformal deformations $X_\epsilon = \left(X, d_\epsilon\right)$ of a proper, geodesic $\delta$-hyperbolic space $X$ are bounded $A(\delta)-$uniform spaces for $0<\epsilon\leq \epsilon_0(\delta)$.
\end{theorem}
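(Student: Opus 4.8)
The plan is to realize the conformal deformation explicitly and to show that the metric geodesics of $X$ are themselves uniform curves. Fix a basepoint $o\in X$, put $\rho_\epsilon(x)=e^{-\epsilon d(x,o)}$, and let $d_\epsilon$ be the length metric induced by this density, $d_\epsilon(x,y)=\inf_\gamma\int_\gamma\rho_\epsilon\,ds$ over rectifiable curves $\gamma$ from $x$ to $y$; write $\ell_\epsilon$ for $\rho_\epsilon$-length. Since $X$ is proper and geodesic it is complete, so the incompleteness of $X_\epsilon$ comes only from the ends at infinity, and the metric completion $\overline{X}_\epsilon$ attaches a boundary $\partial X_\epsilon$ that may be identified with the Gromov boundary $\partial_\infty X$. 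Throughout I use the Gromov product $(x\,|\,y)_o=\tfrac12\bigl(d(x,o)+d(y,o)-d(x,y)\bigr)$ and the four-point hyperbolicity inequality. I will verify the two defining conditions of an $A(\delta)$-uniform curve for $\gamma=[x,y]$: the length (cigar) bound $\ell_\epsilon([x,y])\le A\,d_\epsilon(x,y)$ and the cone bound $\min\{\ell_\epsilon([x,z]),\ell_\epsilon([z,y])\}\le A\,\dist_\epsilon(z,\partial X_\epsilon)$ for every $z\in[x,y]$.

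First I would establish two sharp estimates. Along a geodesic $\gamma$ from $x$ to $y$, $\delta$-thinness of triangles gives $d(\gamma(t),o)=|t-t_0|+(x\,|\,y)_o+O(\delta)$, where $t_0$ marks the point of $[x,y]$ nearest $o$; integrating the resulting tent-shaped integrand yields
\[ \ell_\epsilon([x,y])\ \asymp_\delta\ \tfrac1\epsilon\Bigl(2e^{-\epsilon(x|y)_o}-\rho_\epsilon(x)-\rho_\epsilon(y)\Bigr)\ \le\ \tfrac{C(\delta)}{\epsilon}\,e^{-\epsilon(x|y)_o}, \]
which already gives $\diam_\epsilon X\lesssim_\delta 1/\epsilon$, i.e. boundedness. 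Next I would pin down the distance to the boundary. The geodesic ray issuing from $o$ through $x$ has $d(\cdot,o)=d(x,o)+t$ beyond $x$, so its $\rho_\epsilon$-length is exactly $\tfrac1\epsilon\rho_\epsilon(x)$ and $\dist_\epsilon(x,\partial X_\epsilon)\le\tfrac1\epsilon\rho_\epsilon(x)$; conversely, for any curve $\beta$ escaping to $\partial X_\epsilon$, set $r(s)=d(\beta(s),o)$ and $\Phi=\tfrac1\epsilon\rho_\epsilon$, and note that $|r'|\le1$ forces $\rho_\epsilon(\beta(s))\ge-\tfrac{d}{ds}\Phi(\beta(s))$, whence $\ell_\epsilon(\beta)\ge\Phi(x)=\tfrac1\epsilon\rho_\epsilon(x)$. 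Hence
\[ \dist_\epsilon(x,\partial X_\epsilon)\ \asymp\ \tfrac1\epsilon\,\rho_\epsilon(x). \]

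With these two estimates the cone condition is immediate: if $z=\gamma(s)$ lies on the $x$-side of the nearest point $t_0$, then $d(\cdot,o)\ge d(z,o)$ all along $[x,z]$, so the same integration bounds $\ell_\epsilon([x,z])\le\tfrac{C(\delta)}{\epsilon}\rho_\epsilon(z)\asymp_\delta\dist_\epsilon(z,\partial X_\epsilon)$, and symmetrically when $z$ is on the $y$-side; taking the smaller of the two arcs gives the cone bound. It remains to prove quasiconvexity, $\ell_\epsilon([x,y])\le A\,d_\epsilon(x,y)$. Since the first estimate already controls the left-hand side, this reduces entirely to the lower bound
\[ d_\epsilon(x,y)\ \ge\ \tfrac1A\cdot\tfrac1\epsilon\Bigl(2e^{-\epsilon(x|y)_o}-\rho_\epsilon(x)-\rho_\epsilon(y)\Bigr), \]
asserting that the $X$-geodesic is $\rho_\epsilon$-quasiminimizing among all joining curves.

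The hard part is exactly this last lower bound, since a priori a non-geodesic curve could be cheaper in the $\rho_\epsilon$-metric. The radial component is easy: testing against $F=-\tfrac1\epsilon\rho_\epsilon$, which is $\rho_\epsilon$-Lipschitz because $|\nabla F|=e^{-\epsilon d(\cdot,o)}|\nabla d(\cdot,o)|\le\rho_\epsilon$, gives $d_\epsilon(x,y)\ge|F(x)-F(y)|=\tfrac1\epsilon|\rho_\epsilon(x)-\rho_\epsilon(y)|$ and handles the case of very different radii. The genuinely delicate term is the transverse one, $\tfrac1\epsilon e^{-\epsilon(x|y)_o}$, governing points at comparable radii but far apart: one must show that joining them is cheapest when the curve transits at radius $\approx(x|y)_o$, where $\rho_\epsilon\lesssim e^{-\epsilon(x|y)_o}$, and that neither cutting inward toward $o$ (where the density grows) nor detouring through larger shells (where the $X$-length grows faster than the density decays, provided $\epsilon<1$) can undercut this. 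This is precisely where $\delta$-hyperbolicity — the tree-approximation of triangles — is indispensable, and where the hypothesis $\epsilon\le\epsilon_0(\delta)$ enters, to keep the density's multiplicative oscillation $e^{O(\epsilon\delta)}$ across the $\delta$-scale bounded. I expect to carry this out by adjoining to $F$ a second, Busemann-type $\rho_\epsilon$-Lipschitz test function $G$ adapted to a boundary direction separating $x$ and $y$, with $|G(x)-G(y)|\gtrsim_\delta\tfrac1\epsilon e^{-\epsilon(x|y)_o}$; combining $F$ and $G$ then yields the full lower bound and completes the verification of uniformity.
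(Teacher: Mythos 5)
Your outline gets the peripheral parts of the argument right, and they do match the paper's proof of its generalization (Theorem \ref{uniformization}): boundedness by integrating the density along a curve receding from the basepoint; the lower bound $d_\epsilon(z,\partial X_\epsilon)\gtrsim\rho_\epsilon(z)/\epsilon$, which is exactly \eqref{bdry}; and the cone condition by integrating the tent-shaped lower bound for $d(\cdot,o)$ along the two halves of the geodesic, which is what Lemma \ref{lem1} accomplishes in the paper. One caveat even here: your claimed matching upper bound $d_\epsilon(x,\partial X_\epsilon)\le\rho_\epsilon(x)/\epsilon$ is false in general, because it presupposes a geodesic ray from $o$ \emph{through} $x$, and geodesics in a proper hyperbolic space need not extend (take a tree consisting of a ray with a long leaf edge attached at $o$: from the leaf one must first travel back to $o$ before escaping, so the distance to the boundary is of order $1/\epsilon$, not $\rho_\epsilon(x)/\epsilon$). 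The two-sided estimate requires rough starlikeness; fortunately only the lower bound is used for uniformity, so this slip is harmless.

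The genuine gap is quasiconvexity, and you flag it yourself as ``the hard part'': the lower bound $d_\epsilon(x,y)\gtrsim\tfrac1\epsilon e^{-\epsilon(x|y)_o}\min\left\{1,\epsilon\,|x-y|\right\}$ is never proved, only asserted to be provable by exhibiting a Busemann-type $\rho_\epsilon$-Lipschitz test function $G$ with $|G(x)-G(y)|\gtrsim\tfrac1\epsilon e^{-\epsilon(x|y)_o}$. No such $G$ is constructed and, more importantly, its Lipschitz property is where the entire difficulty of the theorem is concentrated. The natural candidate $G_\xi=\tfrac1\epsilon e^{-\epsilon(\cdot\,|\xi)_o}$ does work on a tree, but in a general $\delta$-hyperbolic space the four-point inequality controls the oscillation of $(\cdot\,|\xi)_o$ only up to additive errors of size $\delta$; it yields no upper-gradient bound of the form $|\nabla G_\xi|\lesssim\epsilon\rho_\epsilon$, since $e^{-\epsilon(z|\xi)_o}$ can vastly exceed $\rho_\epsilon(z)$ precisely in the region $(z|\xi)_o\ll d(z,o)$ where a competing curve spends its time. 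Converting ``locally constant up to $\pm\delta$ jumps'' into an honest Lipschitz estimate along an arbitrary rectifiable curve requires a decomposition or bisection argument over that curve --- and that argument \emph{is} the Gehring--Hayman-type theorem. This is exactly the step the paper does not reprove but imports wholesale: quasiconvexity is obtained by citing Theorem \ref{allu_jose_02_0002} (Theorem 5.1 of Bonk--Heinonen--Koskela in the geodesic case), and the companion two-sided comparison of $d_\epsilon(x,y)$ with $e^{-\epsilon(x|y)_p}$ is Lemma \ref{12}, whose proof again leans on Theorem \ref{allu_jose_02_0002}. Until you either construct $G$ and prove it is $C(\delta)$-Lipschitz for $d_\epsilon$, or replace it by the curve-decomposition argument, the proposal does not prove the theorem.
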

While their work focused on locally compact metric spaces, V{\"a}is{\"a}l{\"a} \cite{vaisala_2004_h} has extended a similar theory to infinite-dimensional Banach spaces, where local compactness is not available. 
He has worked extensively on the theory of quasiconformal mappings in infinite dimensional Banach spaces where quasihyperbolic geodesics may not exist (see \cite{quasiworld}). 
But such domains equipped with quasihyperbolic metric were intrinsic spaces, which means that the distance between two points is equal to the infimum of the length of all arcs joining these points. 
Motivated by this, V{\"a}is{\"a}l{\"a} has developed a theory of Gromov hyperbolic spaces that need not be geodesic or proper (see \cite{vaisala_2004_gh}). 

It is worth to point out that Butler \cite{butler} and Zhou et. al \cite{zpr} have constructed an unbounded analogue of Theorem \ref{bhk_result} using densities that are exponential in Busemann functions.
Zhou et. al \cite{zpr} showed that there is a one-to-one correspondence between bilipschitz classes or proper geodesic Gromov hyperbolic spaces that are roughly starlike with respect to a point on Gromov boundary and the quasisimilarity classes of unbounded locally compact uniform spaces. 
Butler  \cite{butler} constructed an unbounded analogue of Theorem \ref{bhk_result} for  geodesic spaces that are roughly starlike with respect to a point  in its Gromov boundary.
In an earlier work we have constructed a uniformization technique for complete, intrinsic Gromov hyperbolic spaces using densities which are exponential in Busemann functions. This paper is the result of our desire to know whether there is an analogue of Theorem \ref{bhk_result} in the setting of intrinsic Gromov hyperbolic spaces where the existence of geodesics are not assured. Our main result is the following.
\begin{theorem}\label{uniformization}
Let $X$ be an intrinsic Gromov hyperbolic space. Then the conformal deformations $X_\epsilon = \left(X, d_\epsilon\right)$ are bounded $A-$ uniform spaces for $0<\epsilon \leq \epsilon_0$.
\end{theorem}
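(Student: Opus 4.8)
The plan is to follow the Bonk--Heinonen--Koskela strategy behind Theorem \ref{bhk_result}, but with the geodesics---which need not exist here---replaced by the $h$-short arcs that are always available in an intrinsic space and which, in the Gromov hyperbolic setting, play the role of rough geodesics in the sense of V\"ais\"al\"a \cite{vaisala_2004_gh}. Fix a basepoint $w\in X$, write $|x|=d(x,w)$, and deform $X$ by the density $\rho_\epsilon(x)=e^{-\epsilon|x|}$: for a rectifiable curve $\gamma$ put $\ell_\epsilon(\gamma)=\int_\gamma\rho_\epsilon\,\ds$ and set
\be
 d_\epsilon(x,y)=\inf_\gamma \ell_\epsilon(\gamma),
\ee
the infimum running over all curves joining $x$ to $y$. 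Since $X$ is intrinsic, $X_\epsilon=(X,d_\epsilon)$ is again a length space, so it suffices to exhibit, for each pair $x,y$, a single curve meeting the two defining conditions of an $A$-uniform space. Throughout I would fix $\epsilon_0$ so small that $\epsilon_0\delta$ lies below the threshold at which the rough-geodesic estimates for short arcs hold with constants depending only on $\delta$.

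\emph{Step 1: the boundary and boundedness.} Because $\rho_\epsilon$ decays exponentially in $|\cdot|$, a short arc issuing toward infinity has finite $\rho_\epsilon$-length, and the upper bound of Step~2 will give $d_\epsilon(x,y)\lesssim 1/\epsilon$ for all $x,y$, so $X_\epsilon$ is bounded. Passing to the completion $\overline{X_\epsilon}$ attaches a boundary $\partial_\epsilon X$, which I would identify with the Gromov boundary of $X$. The quantitative heart of this step is the comparison
\be
 \dist_\epsilon(x,\partial_\epsilon X)\asymp \tfrac1\epsilon\,e^{-\epsilon|x|}.
\ee
For the lower bound I would test against the potential $\varphi(x)=\tfrac1\epsilon e^{-\epsilon|x|}$: as $|\cdot|$ is $1$-Lipschitz, $\varphi$ is $1$-Lipschitz for $d_\epsilon$, and $\varphi$ vanishes on $\partial_\epsilon X$, whence $\dist_\epsilon(x,\partial_\epsilon X)\ge\varphi(x)$. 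For the upper bound I would run a short arc from $x$ directly away from $w$ toward a boundary point and integrate $\rho_\epsilon$, using that $|\cdot|$ increases at essentially unit rate along such an arc.

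\emph{Step 2: the candidate curve and the cigar condition.} Given $x,y$ and small $h>0$, let $\gamma$ be an $h$-short arc from $x$ to $y$. The geometric input I would extract from hyperbolicity is that, along such an arc, $t\mapsto|\gamma(t)|$ is a rough ``V'': it decreases from $|x|$ to a minimum $m:=\min_{z\in\gamma}|z|$ and then increases to $|y|$, with $m$ within an additive constant $C(\delta,h)$ of the Gromov product $(x|y)_w$. Feeding this into the density integral and using that $|\cdot|$ changes at essentially unit rate gives, for any $z\in\gamma$ on the $x$-side of the apex, $\ell_\epsilon(\gamma[x,z])\le\tfrac1\epsilon e^{-\epsilon|z|}$, and symmetrically on the $y$-side; with Step~1 this yields the cigar condition $\min\{\ell_\epsilon(\gamma[x,z]),\ell_\epsilon(\gamma[z,y])\}\lesssim \dist_\epsilon(z,\partial_\epsilon X)$. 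The same computation gives the upper bound $\ell_\epsilon(\gamma)\lesssim\tfrac1\epsilon e^{-\epsilon m}$.

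\emph{Step 3: quasiconvexity, and the main difficulty.} It remains to prove the length condition $\ell_\epsilon(\gamma)\le A\,d_\epsilon(x,y)$, that is, that the short arc is $\rho_\epsilon$-almost minimizing. For a competing curve $\sigma$ from $x$ to $y$ that dips to roughly the same level $m$ near $w$, the one-dimensional estimate $\ell_\epsilon(\sigma)\ge\tfrac1\epsilon(e^{-\epsilon m}-e^{-\epsilon|x|})$ coming from the $1$-Lipschitz potential already forces $\ell_\epsilon(\sigma)\gtrsim\ell_\epsilon(\gamma)$. The genuinely hard case, and what I expect to be the main obstacle, is a competitor that avoids $w$, remaining at level $\ge m+C$ throughout: here the density is smaller by a factor $e^{-\epsilon C}$, and one must show that this saving is outweighed by extra length. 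This is exactly where $\delta$-hyperbolicity and the smallness of $\epsilon$ become indispensable: a curve joining $x$ to $y$ while staying a definite amount farther from $w$ than the apex of a rough geodesic is exponentially long in $C$, so for $\epsilon\le\epsilon_0(\delta)$ the length growth dominates the density gain. Making this quantitative without geodesics or properness---purely through the four-point inequality and the divergence properties of short arcs---is the delicate step; once it is in hand, the conditions of Step~2 and Step~3 show that the short arcs are uniform curves, and hence that $X_\epsilon$ is a bounded $A$-uniform space with $A=A(\delta)$.
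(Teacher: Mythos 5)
Your overall decomposition (boundedness, a lower bound for the distance to $\partial_\epsilon X$, the rough ``V'' shape of $t\mapsto|\gamma(t)|$ along an $h$-short arc for the cigar condition, and a Gehring--Hayman type estimate for quasiconvexity) is exactly the paper's strategy, but the proposal has two genuine gaps, and the first one sits precisely at the point you yourself call the main obstacle. Your Step 3 never gets proved: you describe why a competitor curve avoiding the basepoint should be exponentially long, and then say ``once it is in hand.'' The paper does not reprove this either, but it closes the step by invoking Theorem \ref{allu_jose_02_0002}, the Gehring--Hayman theorem for $h$-short arcs with $l(\alpha)\le 2|x-y|$ established in the authors' earlier work \cite{allu_jose_}, whose statement is available in the preliminaries; with that citation the quasiconvex condition is one line. (Similarly, your asserted ``rough V'' with apex within $C(\delta,h)$ of $(x|y)_w$ is the paper's Lemma \ref{lem1}, again quoted from \cite{allu_jose_} rather than rederived.) As submitted, your argument proves neither of these inputs nor points to a result that does, so the proof is incomplete exactly where the hyperbolicity is used.

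The second gap concerns the existence of the boundary. Definition \ref{uniform} requires $X_\epsilon$ to be an \emph{incomplete} rectifiably connected space, and your cigar condition is vacuous unless $\partial_\epsilon X\neq\emptyset$; your Step 1 justifies this by running ``a short arc issuing toward infinity'' or ``from $x$ directly away from $w$ toward a boundary point.'' In a non-geodesic, non-proper space no such rays exist --- this is the central difficulty of the intrinsic setting, and assuming them is circular. The paper handles it with V\"ais\"al\"a's $(\mu,h)$-roads: Lemma \ref{non-completeness lemma} concatenates the road arcs $\alpha_n$ via the length maps $g_{nm}$ (bridged by an $h$-short arc of length at most $\mu+h$) into a continuous $(1,K)$-rough quasi-geodesic, along which $\int\rho_\epsilon\,\ds<\infty$, so the points $u_n$ form a $d_\epsilon$-Cauchy sequence; local bilipschitz equivalence of $d$ and $d_\epsilon$ then shows this sequence has no limit in $X_\epsilon$, giving incompleteness. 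Some substitute for this construction is indispensable: without it neither the incompleteness required by the definition of uniformity, nor the upper half of your comparison $\dist_\epsilon(x,\partial_\epsilon X)\asymp\frac{1}{\epsilon}e^{-\epsilon|x|}$, is available. (Your lower bound via the $1$-Lipschitz potential $\varphi(x)=\frac1\epsilon e^{-\epsilon|x|}$ is a nice alternative to the paper's direct subcurve estimate \eqref{bdry}, and is sound once one knows boundary points only arise from sequences with $|u_n|\to\infty$.)
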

In \cite[Proposition 4.13]{BHK}, Bonk, Heinonen, and Koskela have established that there is a natural quasi-ismometric identification between the Gromov boundary of a proper, geodesic, hyperbolic space and the metric boundary of the uniformized space $X_\epsilon$. We provide an analogous result as follows. We denote the Gromov boundary of a hyperbolic space $X$ as $\delta_\infty X$.
\begin{theorem}\label{thm2}
Let $X$ be a $\delta-$hyperbolic space and $\epsilon_0$ be the constant as in obtained in Theorem \ref{allu_jose_02_0002} such that $\epsilon_0 <\min\left\{1, 1/5\delta\right\}$. Then there is a natural identification which is a  quasi-isometry between  $\left(\delta_\infty X, \theta_{\epsilon, o}\right) $ and  $\left(\delta_\epsilon X, d_\epsilon\right)$.
\end{theorem}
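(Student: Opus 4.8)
The plan is to realize both metrics, under the natural identification, as comparable to $e^{-\epsilon(\xi\mid\eta)_o}$, where $(\xi\mid\eta)_o$ denotes the Gromov product of the boundary points $\xi,\eta$ based at $o$; comparability of the two metrics then follows at once. Two ingredients are available. First, the visual-metric estimate: since $\epsilon\le\epsilon_0<1/5\delta$, the chain construction of $\theta_{\epsilon,o}$ on the Gromov boundary satisfies $\theta_{\epsilon,o}(\xi,\eta)\asymp e^{-\epsilon(\xi\mid\eta)_o}$, with constants depending only on $\epsilon\delta$; the restriction $\epsilon_0<1/5\delta$ is precisely what keeps the infimum over chains from collapsing the exponential. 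Second, the interior estimate relating the uniformized distance to the Gromov product, $d_\epsilon(x,y)\asymp e^{-\epsilon(x\mid y)_o}$ for $d$-separated $x,y\in X$, which is part of the machinery established in proving Theorem \ref{allu_jose_02_0002} (the upper bound holding in full generality via a single test curve).

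First I would define the natural identification $\Phi\colon\delta_\infty X\to\delta_\epsilon X$. Given $\xi\in\delta_\infty X$ represented by a sequence $(x_i)$ with $(x_i\mid x_j)_o\to\infty$, the upper estimate gives $d_\epsilon(x_i,x_j)\lesssim e^{-\epsilon(x_i\mid x_j)_o}\to 0$, so $(x_i)$ is $d_\epsilon$-Cauchy and converges to a point $\Phi(\xi)$ in the completion $\overline{X_\epsilon}$. Since $(x_i\mid z)_o\le d(z,o)$ for every fixed $z\in X$, the lower estimate gives $d_\epsilon(x_i,z)\gtrsim e^{-\epsilon d(z,o)}\not\to 0$, so $\Phi(\xi)$ lies in $\delta_\epsilon X$ and not in $X$. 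Independence of the representative and injectivity both reduce to the two-sided interior estimate: if $\xi\ne\eta$ then $(\xi\mid\eta)_o<\infty$, whence $d_\epsilon(\Phi(\xi),\Phi(\eta))\gtrsim e^{-\epsilon(\xi\mid\eta)_o}>0$. For surjectivity I would take any $p\in\delta_\epsilon X$, write it as a $d_\epsilon$-limit of a sequence $(x_i)$ in $X$, and use the estimate to show first that $d(x_i,o)\to\infty$ and then that $(x_i\mid x_j)_o\to\infty$, so that $(x_i)$ represents a point of $\delta_\infty X$ mapping to $p$.

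The final step is the quantitative comparison. Passing to the limit in the interior estimate along representative sequences yields $d_\epsilon(\Phi(\xi),\Phi(\eta))\asymp e^{-\epsilon(\xi\mid\eta)_o}$; here one must absorb the additive $2\delta$-ambiguity in the boundary Gromov product $(\xi\mid\eta)_o=\inf\liminf_{i,j}(x_i\mid y_j)_o$ into the comparison constant, which is harmless since $e^{\pm 2\delta\epsilon}$ is bounded for $\epsilon\le\epsilon_0$. Combining with the visual-metric estimate gives $\theta_{\epsilon,o}(\xi,\eta)\asymp d_\epsilon(\Phi(\xi),\Phi(\eta))$ with constants depending only on $\delta$ and $\epsilon$, which is the asserted quasi-isometry. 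I expect the main obstacle to be the interior lower bound $d_\epsilon(x,y)\gtrsim e^{-\epsilon(x\mid y)_o}$ in the intrinsic, non-geodesic setting: the usual argument integrates the density along the geodesic $[x,y]$, using that this geodesic attains its minimal $d$-distance to $o$ near $(x\mid y)_o$, but here no geodesic is available. The remedy is to run the argument over arbitrary rectifiable curves joining $x$ to $y$, invoking hyperbolicity (thinness estimates valid for general curves, in the spirit of V\"ais\"al\"a's theory) to show that every such curve must reach $d$-distance at least $(x\mid y)_o-O(\delta)$ from $o$, so that its $d_\epsilon$-length is bounded below by the required exponential; the same care is needed to ensure these estimates survive the passage to the boundary in the completion.
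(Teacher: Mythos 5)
Your overall route is the same as the paper's. The paper's proof of Theorem \ref{thm2} rests on exactly your ``interior estimate'': its Lemma \ref{12} states that $\tfrac{1}{C}d_\epsilon(x,y) \le \tfrac{1}{\epsilon}e^{-\epsilon(x|y)_p}\min\left(1/2,\epsilon|x-y|\right) \le C\, d_\epsilon(x,y)$, and the identification argument then proceeds precisely as you outline: Gromov sequences are $d_\epsilon$-Cauchy by the upper bound; the limit lies in $\delta_\epsilon X$ (your quantitative argument via $(x_i|z)_o\le |z-o|$ is a fine substitute for the paper's appeal to the identity map being locally bilipschitz); well-definedness and injectivity follow from the two-sided bound; and the bilipschitz comparison with $\theta_{\epsilon,o}$ follows by combining \eqref{12eq} with \eqref{tau} and \eqref{metametric} (the paper kills the $\min$ term for distinct boundary points by the convention $|x-y|=\infty$, which is the same device as your restriction to $d$-separated points before passing to the limit). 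One shared caveat: both your surjectivity sketch and the paper's one-line claim tacitly require that a $d$-bounded, $d_\epsilon$-Cauchy sequence converge in $X$, i.e.\ completeness of $X$.

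The genuine weak point is your fallback proof of the lower bound $d_\epsilon(x,y)\gtrsim \tfrac{1}{\epsilon}e^{-\epsilon(x|y)_o}$, which is indeed the crux, and which is \emph{not} part of the machinery of Theorem \ref{allu_jose_02_0002}: that theorem is only the Gehring--Hayman comparison $l_\rho(\alpha)\le K\, l_\rho(\gamma)$ between an $h$-short arc and an arbitrary competitor curve, while the two-sided estimate is a separate lemma of the present paper (Lemma \ref{12}). Your proposed remedy --- that hyperbolicity forces every rectifiable curve joining $x$ to $y$ to pass at distance roughly $(x|y)_o$ from $o$ --- does not hold: read literally (``reach distance at least $(x|y)_o-O(\delta)$'') it is trivially witnessed by the endpoint $x$ and gives no lower bound on the integral, and read in the intended sense (``come within $(x|y)_o+O(\delta)$ of $o$'') it is false, since even in the hyperbolic plane a curve may detour around $o$ at height $(x|y)_o+100$. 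What saves the estimate is not that such detours are impossible but that they are exponentially long, and that is exactly the content of Theorem \ref{allu_jose_02_0002}: the paper's Lemma \ref{12} first reduces, at the cost of the factor $K$, the infimum over all curves to the integral over a single $h$-short arc $\gamma$ with $l(\gamma)\le 2|x-y|$ and $h<1/13$, and only then runs pointwise estimates on $\gamma$, namely $\left||p-x_\gamma|-(x|y)_p\right|\le 4\delta+4h$ (via the Tripod Lemma \ref{allu_jose_02_0013}) and $|p-u|\ge |p-x_\gamma|+|x_\gamma-u|-8\delta-8h$ (via Lemma \ref{lem1}). Your proof becomes complete once your ``thinness for general curves'' claim is replaced by this application of Theorem \ref{allu_jose_02_0002}; without that replacement the key lower bound is unproved.
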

Because of the lack of geodesics, V{\"a}is{\"a}l{\"a} developed the tools of $h-$short arcs, roads, and biroads instead of geodesics, geodesic rays and geodesic lines respectively. 
Note that the theory of Gromov hyperbolic spaces has been widely explored but there is always the underlying assumption that the spaces are geodesic and proper. 
But, it is worth to point out that Bonk and Schramm \cite{bs} have introduced the concepts of almost geodesic spaces, rough geodesics and rough geodesic rays and they proved that every hyperbolic space can be embedded isometrically into a complete geodesic hyperbolic space. 

In the study of general Gromov hyperbolic spaces the proofs are more complicated than in the classical case. A reader can question the motivation to study these theory.
We note that to assure the existence of geodesic rays or lines one may have to make additional assumptions, for example, that the space is proper. 
Furthermore, considering general hyperbolic spaces is more suited to the philosophy of Gromov hyperbolic spaces since bounded distortions does not affect the structure of the space on large scale.
\section{Preliminaries}
\subsection{Notations}
Throughout this paper, let  $X$ denote an arbitrary metric space and for $x, y\in X$ we denote $|x-y|$ as the distance between $x$ and $y$. 
Every metric space $X$ can be isometrically embedded into a complete metric space which we denote as $\bar{X}$. 
In this setting, we denote $\bar{A}$ as $\delta A$ the closure and boundary of $A$ in the metric topology.
We call $\delta X:= \bar{X}\setminus X$ as the metric boundary of $X$. For $x\in X$, we write $\delta_X(x):= \dist \left(x, \delta X\right)$ for the distance between $x$ and the boundary of $X$, often, if there is no possibility of confusion, we denote it as $\delta(x)$.

\subsection{Arcs in metric space}
An arc in a space $X$ is a subset homeomorphic to a closed real interval which implies that the arc is compact and has two endpoints. We write $\alpha :x \cra y$ to denote that $\alpha $ is an arc with endpoints $x$ and $y$. Often, this notation also provides the direction for $\alpha$ from $x$ to $y$. For an arc $\alpha :x \cra y$, we denote $\alpha[u, v]$ as the closed sub-arc of $\alpha$ between the points $u, v \in \alpha$, where $u\in \alpha[x, v]$. For an arc $\alpha : [a, b] \rightarrow X$, we sometimes denote $\alpha(s, t)$ as the image of the function $\alpha$ restricted to the interval $[s, t]\subset [a, b]$.

For an arc $\alpha : [a, b] \rightarrow X$, we define the length of the arc $\alpha $ in the usual way as
$$l(\alpha) := \sup \left\{ \sum_{i=1}^n |\alpha(t_i) - \alpha(t_{i-1})| \middle|  a= t_0<t_1<...<t_n=b\right\},$$
and the arc $\alpha$ is rectifiable if $l(\alpha) <\infty$ and the space $X$ is rectifiably connected if each pair of points can be joined by a rectifiable path. A metric space $X $ is said to be \textit{intrinsic }if 
$$|x-y|=\inf\left\{l(\alpha)|\alpha: x\cra y\right\},$$
for all $x, y\in X$. Let $h\geq 0$. An arc $\alpha:x\cra y$ is \textit{$h-$short} if
$$l(\alpha)\leq |x-y|+h.$$
It is easy to see that every subarc of an $h-$short arc is again $h-$short. For any rectifiable arc $\alpha $ in $X$ there exists an arclength parametrization $\alpha:[0, L] \rightarrow X$, where $L= l(\alpha)$. We may assume that the arcs considered in this paper are arclength parametrized without further mention. It is known that if $\psi: [0, L] \rightarrow \alpha$ is the arclength parametrization of an $h-$short arc $\alpha$, where $L= l(\alpha)$, then
\be \label{hroughgeo}
|s-t|-h \leq |\psi(s) -\psi(t)| \leq |s-t|
\ee
for all $s, t \in [0, L]$.\\[2mm]
The idea of rough geodesic ray and rough geodesic segments have been introduced by Bonk and Schramm\cite{bs}. An arc $\alpha: [a, b] \rightarrow X$ is said to be $k-$rough geodesic if it satisfies 
$$|s-t|-k \leq |\alpha(s) -\alpha(t)| \leq |s-t|+k,$$
for every $s, t\in [a, b]$. Thus an $h-$ short arc is $h-$rough geodesic. Note that the converse is not true, in fact a rough geodesic need not be even continuous.
A map $f:(X, d)\rightarrow (Y, d')$ between metric spaces is said to be $A$-quasi-isometry $(A\geq 1)$ or simply quasi-isometry if 
$$d(x, y)/A \leq d'(f(x), f(y)) \leq A d(x, y)$$
for every $x, y\in X$.
\subsection{Conformal deformations}
Let $X$ be a rectifiably connected metric space, we fix a point $p\in X$ and  define a family of maps $\rho_\epsilon :X \rightarrow (0, \infty)$ for $\epsilon >0$ as
\begin{equation}\label{density}
\rho_\epsilon (x) = \exp\left(-\epsilon|x-p|\right).
\end{equation}
Define a metric $d_\epsilon$ on $X$ by 
\begin{equation}\label{deformedmetric}
d_\epsilon(x, y) = \inf \int_\gamma \rho_\epsilon \ds ,
\end{equation}
where the infimum is taken over all rectifiable curves $\gamma $ joining $x$ and $y$. We write  $X_\epsilon = \left(X, d_\epsilon \right)$ for conformal deformation of $X$ with a conformal factor of $\rho_\epsilon$.                                                                                                                                                                                                                                                                                                                                                                                                                        Note that, for any rectifiable curve $\gamma$ in $X$, the length of the curve $\gamma$, denoted by $l_\epsilon\left(\gamma\right)$, in the metric space $X_\epsilon$ is given by
$$l_\epsilon(\gamma)= \int_\gamma \rho_\epsilon \ds,$$
provided the identity map $(X, |x-y|) \rightarrow (X, l)$ is a homeomorphism, where $l$ is the inner metric of $X$(see \cite[Lemma 2.6]{BHK} and Appendix \cite{BHK} for a detailed discussion).\\[2mm]
Applying triangle inequality, we derive the following Harnack's type inequality:
\begin{equation}\label{harnack}
\exp\left(-\epsilon|x-y|\right) \leq \frac{\rho_\epsilon(x)}{\rho_\epsilon(y)}\leq \exp\left(\epsilon|x-y|\right)
\end{equation}
for every $x,y\in X$ and $\epsilon >0$.
\subsection{Uniform spaces}
Uniform domains in the Euclidean spaces was introduced by Martio and Sarvas \cite{martio_sarvas_}, and these domains became the so called 'nice' domains in the theory of quasiconformality. 
Roughly speaking, a domain is called uniform if any two points can be joined by an arc which is not too close to the boundary or not too long. 
\begin{definition} \label{uniform}
Let $X$ be a rectifiably connected incomplete metric space and let $A\geq 1$, then $X$ is called $A-$ uniform space if for any two points $x, y\in X$ there exists a rectifiable arc $\gamma:x\cra y$ satisfying the following conditions.
\begin{enumerate}
\item (Quasiconvex condition) $l(\gamma) \leq A|x-y|$,
\item (Double cone condition) $\min \left\{l(\left(\gamma[x, z]\right), l(\left(\gamma[z, y]\right)\right\}\leq A \delta(z)$.
\end{enumerate}
\end{definition}

\subsection{Gromov hyperbolic spaces}
There has been many interesting studies in the theory of Gromov hyperbolic spaces. For further read, we refer to \cite{bridson, buyalo}. We follow  V{\"a}is{\"a}l{\"a} \cite{vaisala_2004_gh} in defining general Gromov hyperbolic spaces.
\begin{definition}[Gromov hyperbolic space]
For $x, y, p\in X$ we define the \textit{Gromov product} $(x|y)_p$ by 
$$2(x|y)_p = |x-p|+|y-p|-|x-y|.$$
Let $\delta \geq 0$. A space is \textit{Gromov $\delta-$hyperbolic} if 
$$(x|z)_p \geq \min\left\{(x|y)_p , (y|z)_p\right\} -\delta$$
for all $x, y, z, p\in X$. A space is \textit{hyperbolic} if it is Gromov $\delta-$hyperbolic for some $\delta\geq 0$.
\end{definition}
The Gromov product $(x|y)_p$ measures the error in the triangle inequality.
V{\"a}is{\"a}l{\"a} \cite{vaisala_2004_h} has proved that uniform domains equipped with the quasihyperbolic metric in arbitrary Banach spaces are hyperbolic. 
Bounded spaces are trivially hyperbolic hence we make the standing assumption that every Gromov hyperbolic spaces we consider in this paper are unbounded.
\begin{definition}[$h-$short triangles]\label{hshort}
By an $h-$short triangle we mean a triple of $h-$short arc $\alpha : y\cra z$, $\beta : x\cra z$, $\gamma: x\cra y$. 
The points $x, y, z$ are the vertices and the arcs $\alpha, \beta, \gamma$ are the sides of the triangle $\Delta=\left(\alpha, \beta, \gamma\right)$. 
\end{definition}
We follow the notations used by V{\"a}is{\"a}l{\"a} \cite{vaisala_2004_gh}. Let $\Delta=\left(\alpha, \beta, \gamma\right)$ be an $h-$short triangle in a hyperbolic space as in  the Definition \ref{hshort}.
Let $x_\gamma, y_\gamma$ be the points in $\gamma $ such that $l(\gamma_x)= (y|z)_x$ and $l(\gamma_y)= (x|z)_y$, where $\gamma_x = \gamma[x, x_\gamma]$ and $\gamma_y = \gamma[y_\gamma, y]$. 
If we set $\gamma^*= \gamma[x_\gamma, y_\gamma]$ which is called the center of the side $\gamma$ in the triangle $\Delta$, then $\gamma$ is the union of three subarcs $\gamma_x, \gamma^*, \gamma_y$. We call it the subdivision of $\gamma$ induced by the triangle $\Delta$ or by the point $z$. Similarly, we can divide each of the sides into disjoint union of three arcs. \\[2mm]
Tripod map is an important tool in the theory of geodesic Gromov hyperbolic spaces.  V{\"a}is{\"a}l{\"a} \cite[Tripod lemma 2.15]{vaisala_2004_gh} obtained the following result which is sufficient for most of the applications.
\begin{lemma} \label{allu_jose_02_0013}
Suppose that $\alpha_i :a\cra b_i, i=1,2,$ are $h-$short arcs in a $\delta-$hyperbolic space. Let $x_1\in \alpha_1$ be a point with $|x_1 -a| \leq (b_1|b_2)_a$, and let $x_2, x_2' \in \alpha_2$ be points with $|x_2-a|=|x_1-a|$ and $l\left(\alpha_2[a, x_2']\right) = l\left(\alpha_1[a, x_1]\right)$. Then
$$|x_1-x_2| \leq 4\delta +h, |x_1-x_2'| \leq 4\delta+2h.$$
\end{lemma}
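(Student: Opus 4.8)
The plan is to convert the two desired distance bounds into lower bounds on Gromov products based at $a$, and then to produce those lower bounds by chaining the $\delta$-hyperbolicity inequality, feeding in the hypothesis $|x_1 - a| \le (b_1|b_2)_a$ as the crucial input. Writing $t = |x_1 - a|$, the defining relation gives $2(x_1|x_2)_a = |x_1-a| + |x_2-a| - |x_1-x_2| = 2t - |x_1-x_2|$, so $|x_1 - x_2| = 2\bigl(t - (x_1|x_2)_a\bigr)$. Thus it suffices to show $(x_1|x_2)_a \ge t - 2\delta - h/2$, and the entire argument reduces to bounding this product from below.

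First I would establish the elementary fact that a point on an $h$-short arc has Gromov product with the far endpoint essentially equal to its distance from the base point. Concretely, if $u$ lies on an $h$-short arc $\mu : a \cra b$ with $L = l(\mu)$, then expanding $2(u|b)_a = |u-a| + |b-a| - |u-b|$ and inserting the length estimates coming from \eqref{hroughgeo} (namely $|b-a| \ge L - h$, while $|u-b| \le l(\mu[u,b]) = L - l(\mu[a,u]) \le L - |u-a|$) yields $(u|b)_a \ge |u-a| - h/2$. Applying this to $x_1 \in \alpha_1$ and to $x_2 \in \alpha_2$ gives $(x_1|b_1)_a \ge t - h/2$ and $(x_2|b_2)_a \ge t - h/2$.

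Next I would apply the hyperbolicity inequality twice. From $(x_1|b_1)_a \ge t - h/2$ together with the hypothesis $(b_1|b_2)_a \ge t$, the inequality $(x_1|b_2)_a \ge \min\{(x_1|b_1)_a,(b_1|b_2)_a\} - \delta$ gives $(x_1|b_2)_a \ge t - \delta - h/2$. A second application, combining this with $(x_2|b_2)_a \ge t - h/2$, gives $(x_1|x_2)_a \ge \min\{(x_1|b_2)_a,(x_2|b_2)_a\} - \delta \ge t - 2\delta - h/2$, which is exactly the bound needed to conclude $|x_1 - x_2| \le 4\delta + h$. For the second estimate I would pass from $x_2$ to $x_2'$ by a direct length comparison along $\alpha_2$: the arclength parameter $s_2 = l(\alpha_2[a,x_2])$ is pinned to $[t,t+h]$ by $|x_2-a|=t$ via \eqref{hroughgeo}, and the common parameter $s_1 = l(\alpha_1[a,x_1]) = l(\alpha_2[a,x_2'])$ is pinned to $[t,t+h]$ by $|x_1-a|=t$. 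Hence $|x_2 - x_2'| \le |s_1 - s_2| \le h$, and the triangle inequality gives $|x_1 - x_2'| \le |x_1 - x_2| + |x_2 - x_2'| \le 4\delta + 2h$.

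The main obstacle is not any single estimate but the bookkeeping forced by the absence of genuine geodesics: throughout one must consistently separate the arclength parameter of a point on an $h$-short arc from its metric distance to $a$, absorbing each discrepancy into the additive constant $h$ through \eqref{hroughgeo}. The presence of two distinct comparison points $x_2$ and $x_2'$ is precisely an artifact of this distinction, and the extra $h$ appearing in the second bound is exactly the cost of converting an arclength-defined point into a metric-defined one. Keeping these two notions of position cleanly separated, while still chaining the hyperbolicity inequality with the correct constants, is where the care is required.
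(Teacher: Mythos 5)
Your proof is correct, and every constant comes out exactly as stated: the reduction $|x_1-x_2| = 2\bigl(t - (x_1|x_2)_a\bigr)$, the $h$-short estimate $(u|b)_a \ge |u-a| - h/2$, the two applications of the hyperbolicity inequality (the hypothesis $t \le (b_1|b_2)_a$ entering precisely where it must), and the arclength pinning $|x_2-x_2'|\le h$ are all sound. Note that the paper itself does not prove this lemma at all --- it imports it verbatim from V\"ais\"al\"a's Tripod Lemma 2.15 in \cite{vaisala_2004_gh} --- and your argument is essentially V\"ais\"al\"a's own: reduce to a lower bound on the Gromov product, exploit $h$-shortness at the endpoints, chain the $\delta$-inequality through $b_1$ and $b_2$, and handle $x_2'$ by a length comparison along $\alpha_2$. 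So you have, in effect, correctly reconstructed the cited proof rather than found a new one.
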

Next, we give the definition of Gromov boundary. Note that, in a proper geodesic hyperbolic metric space one could define the Gromov boundary as the equivalence classes of geodesic rays. 
But since we are interested in spaces which are neither geodesic nor proper we will adapt the definition of V{\"a}is{\"a}l{\"a} \cite{vaisala_2004_gh} using the Gromov sequences. 
Note, that both the definitions are equivalent in geodesic proper hyperbolic space.
\begin{definition}[Gromov sequences]
Let $X$ be a metric space and let $o\in X$ be fixed. We say that a sequence $\{x_n\} $ in $X$ is a Gromov sequence if $(x_i|x_j)_o \rightarrow \infty$ as $i\rightarrow \infty$ and $j\rightarrow \infty$. 
\end{definition}
Two Gromov sequences $\{x_n\}$ and $\{y_n\}$ are said to be equivalent if $(x_i|y_i)_o \rightarrow \infty $ as $i\rightarrow \infty$ and in a hyperbolic space this relation is an equivalence relation.
Let $\hat{x}$ be the equivalence class containing the Gromov sequence $\{x_n\}$. The Gromov boundary $\delta_\infty X $ is defined as 
\be \label{allu_jose_02_0011}
\delta_\infty X = \left\{ \hat{x}: \{x_n\}  \text{ is a Gromov sequence in } X\right\}.
\ee
Let $0<\epsilon<1$ and $o\in X$ be fixed. For $x, y \in X\cup \delta_\infty X$ we write
\be \label{tau}
\tau_{\epsilon, o}(x, y)= e^{-\epsilon(x|y)_o}.
\ee
The function $\tau_{\epsilon, o}$ need not be a metric but V{\"a}is{\"a}l{\"a} \cite[Proposition 5.16]{vaisala_2004_gh} showed that if $\epsilon\delta<1/5$, then there exists a metametric $\theta_{\epsilon, o}$ in $X\cup \delta_\infty X$ satisfying 
\be \label{metametric}
\tau_{\epsilon, o}(x, y) /2 \leq \theta_{\epsilon, o}(x, y) \leq \tau_{\epsilon, o}(x, y)
\ee
for all $x, y \in X\cup \delta_\infty X$. Moreover, it was shown that $ \theta_{\epsilon, o}(x, y) $ is a metric on $\delta_\infty X$.\\[2mm]
In a general hyperbolic space, there are no geodesic rays joining a point in the space to the boundary. Inorder to overcome this difficulty, V{\"a}is{\"a}l{\"a} introduced the theory of roads.
\begin{definition}[Roads]
Let $X$ be a metric space and let $\mu \geq 0$, $h\geq 0$. A $(\mu, h)-$road in $X$ is a sequence $\bar{\alpha}$ of arcs $\alpha_i:y\cra u_i$ with the following properties.
\begin{enumerate}
\item Each $\alpha_i$ is $h-$short.
\item The sequence of lengths $l(\alpha_i)$ is increasing and tends to $\infty$.
\item For $i\leq j$, the length map $g_{ij}:\alpha_i \rightarrow \alpha_j$ with $g_{ij}y=y$ satisfies $|g_{ij}x-x|\leq \mu $ for all $x\in \alpha_i$.
\end{enumerate}
\end{definition}
V{\"a}is{\"a}l{\"a} obtained the following result.
\begin{theorem}
Let $X$ be an intrinsic $\delta-$hyperbolic spaace and let $y\in X$, $\eta \in \delta_\infty X$, $h>0$. Then there is a $(4\delta+2h, h)$-road $\bar{\alpha}:y \cra \eta$.
\end{theorem}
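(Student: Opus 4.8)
The plan is to realize the boundary point $\eta$ by a Gromov sequence, to join $y$ to the terms of this sequence by $h$-short arcs (which exist precisely because $X$ is intrinsic), and then to show that suitable initial segments of these arcs form the desired road, with the stability constant $\mu = 4\delta+2h$ coming directly from the Tripod Lemma \ref{allu_jose_02_0013}.

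First I would fix a Gromov sequence $\{x_n\}$ with $\hat{x}=\eta$. Since changing the basepoint alters a Gromov product by at most the distance between the basepoints, $(x_i|x_j)_y\to\infty$ as $i,j\to\infty$, so after passing to a subsequence (still a representative of $\eta$, as a subsequence of a Gromov sequence is equivalent to it) I may assume $(x_i|x_j)_y\geq i$ whenever $j\geq i$. For each $i$ I choose an $h$-short arc $\sigma_i:y\cra x_i$; taking $j=i$ gives $|y-x_i|=(x_i|x_i)_y\geq i$, so $l(\sigma_i)\geq|y-x_i|\geq i$ and the arc is long enough to be truncated at arclength $i$. I then let $\alpha_i$ be the initial subarc of $\sigma_i$ of length $L_i=i$. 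Being a subarc of an $h$-short arc it is again $h$-short, and its length $i$ is strictly increasing and tends to $\infty$, so conditions (1) and (2) of a road hold immediately.

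For condition (3) I fix $i\leq j$ and $x\in\alpha_i$ and set $s=l(\alpha_i[y,x])\leq L_i=i$. By \eqref{hroughgeo} this gives $|x-y|\leq s\leq i\leq (x_i|x_j)_y$, which is exactly the hypothesis of the Tripod Lemma \ref{allu_jose_02_0013} for the $h$-short arcs $\sigma_i:y\cra x_i$ and $\sigma_j:y\cra x_j$ with base $y$. The image $g_{ij}x$ is by definition the point of $\alpha_j$ (equivalently of $\sigma_j$, since $s\leq i\leq j=L_j$) at arclength $s$ from $y$, which is precisely the point $x_2'$ of that lemma; hence $|g_{ij}x-x|\leq 4\delta+2h=\mu$. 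This establishes the road structure.

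Finally I would verify that the road actually lands at $\eta$. Writing $u_i$ for the endpoint of $\alpha_i$, a short computation with \eqref{hroughgeo} yields $(u_i|x_i)_y\geq i-h\to\infty$, so $\{u_i\}$ is a Gromov sequence equivalent to $\{x_i\}$ and thus represents $\eta$. The main obstacle is exactly what the truncation resolves: the Tripod Lemma only controls points within Gromov-product distance of the base, so for the full arcs $\sigma_i$ the estimate in (3) would break down near the far endpoints $x_i$. The whole construction therefore hinges on choosing the subsequence so that the Gromov products grow (here at least linearly in the index) and then cutting each arc short enough that \emph{every} one of its points lies in the tripod's range for \emph{all} later arcs simultaneously, while still keeping the lengths $L_i\to\infty$.
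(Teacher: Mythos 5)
The paper does not actually prove this statement---it is imported verbatim from V\"ais\"al\"a \cite{vaisala_2004_gh}---and your argument is correct and is essentially V\"ais\"al\"a's own construction: represent $\eta$ by a Gromov sequence with $(x_i|x_j)_y\geq i$ for $j\geq i$, join $y$ to its terms by $h$-short arcs (available since $X$ is intrinsic), truncate at arclength $i$, and extract the stability constant $4\delta+2h$ for the length maps from the Tripod Lemma \ref{allu_jose_02_0013}. Your two supporting checks are also the right ones: the truncation keeps every point of $\alpha_i$ within the tripod range $(x_i|x_j)_y$ for all $j\geq i$, and the estimate $(u_i|x_i)_y\geq i-h$ shows the truncated endpoints still form a Gromov sequence representing $\eta$, so the road indeed ends at $\eta$.
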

Also, we point out that if $\bar{\alpha}$ is a $(\mu, h)$-road then one can construct a map $\psi :[0, \infty) \rightarrow X$ (see \cite[Remark 6.4]{vaisala_2004_gh}) that satisfies the rough isometry condition
$$|s-t|-\mu-h\leq |\psi(s)-\psi(t)| \leq |s-t|+\mu .$$
Gehring and Hayman \cite{gehring_hayman_} have shown that the hyperbolic geodesics in a simply connected hyperbolic domain in Euclidean spaces are essentially the shortest curves connecting any two given points upto a universal multiplicative constant. 
Bonk, Heinonen, and Koskela \cite{BHK}, have established a Gehring-Hayman theorem for conformal deformations of a geodesic Gromov hyperbolic metric space.
We \cite{allu_jose_} were able to show that the $h-$short arcs can be considered as a substitute for geodesics if we choose $h>0$ sufficiently small and obtained the following theorem analogous to  \cite[Theorem 5.1]{BHK}. 
Note that in \cite{allu_jose_} we have obtained the result for $(\kappa, h)$-Rips space which is an equivalent definition for $\delta$ hyperbolic space, (see \cite[Theorem 2.34, Theorem 2.35]{vaisala_2004_gh}). 
We recall a slightly modified version of \cite[Theorem 1.3]{allu_jose_}.
\begin{theorem}\label{allu_jose_02_0002}
Let $X$ be an intrinsic metric space which is $\delta-$hyperbolic and $\rho :X\ra (0, \infty)$ be a continuous function that satisfies 
\be \label{allu_jose_02_0003}
\exp\left(-\epsilon|x-y|\right) \leq \frac{\rho(x)}{\rho(y)} \leq \exp\left(\epsilon|x-y|\right)
\ee
for all $x,y \in X$ and  $\epsilon >0$. For $h<1/13$, let $\mathcal{A}$ denote the family of all $h-$short arcs $\alpha:x\cra y$ for any $x, y\in X$ satisfying $l(\alpha) \leq 2|x-y|$.
Then, there exists $\epsilon_0(\delta, h)>0$, $K(\delta, h)>0$ such that for any $0<\epsilon\leq \epsilon_0$  and $x, y\in X$ 
\begin{equation}\label{allu_jose_02_0004}
l_\rho(\alpha) \leq K l_\rho (\gamma)
\end{equation}
where  $\gamma: x\cra y $ is any curve and  $\alpha\in \mathcal{A}$ with same endpoints.
\end{theorem}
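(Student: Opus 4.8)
The statement is a Gehring--Hayman theorem: it asserts that an $h$-short arc in the family $\mathcal{A}$ is, up to the multiplicative constant $K$, the $\rho$-shortest curve between its endpoints. The plan is to follow the scheme of \cite[Theorem 5.1]{BHK}, with two substitutions forced by the absence of geodesics. The $h$-short arc $\alpha$ plays the role of the geodesic and is treated as a rough geodesic through \eqref{hroughgeo}, and every place where the classical argument invokes thin triangles or nearest-point projections is replaced by the Tripod Lemma (Lemma \ref{allu_jose_02_0013}). First I would reduce to the case that $\gamma$ is rectifiable, since otherwise $l_\rho(\gamma)=\infty$ and there is nothing to prove, and record that, because $\alpha\in\mathcal{A}$, we have $l(\alpha)\le 2|x-y|$, so $\alpha$ is a genuine rough geodesic and its behaviour relative to the density is controlled up to the additive errors $\delta$ and $h$.

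The engine of the proof is a decomposition of $\alpha$ adapted to the Harnack scale $1/\epsilon$. Using the arclength parametrization I would cut $\alpha$ into consecutive subarcs $\alpha_0,\alpha_1,\dots$, each of length comparable to $1/\epsilon$; by the Harnack inequality \eqref{allu_jose_02_0003} the density $\rho$ then varies by at most a fixed factor on each $\alpha_j$, so that $l_\rho(\alpha)\asymp \tfrac1\epsilon\sum_j \rho(w_j)$ for representative points $w_j\in\alpha_j$. Hyperbolicity enters through the claim that the weights $\rho(w_j)$ decay geometrically away from the subarc where $\rho$ is largest along $\alpha$, which is the natural centre of the arc; the $\delta$-hyperbolic inequality together with the Harnack bound forces these weights to decay with a ratio that can be pushed below $1$ precisely by taking $\epsilon\le\epsilon_0(\delta,h)$. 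This turns the sum into a convergent geometric series dominated by its central terms, and reduces the theorem to bounding those central terms by $l_\rho(\gamma)$.

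The heart of the matter, and the step I expect to be the main obstacle, is this lower bound matching each dominant term against $l_\rho(\gamma)$. For each relevant scale $j$ I would attach to $w_j$ a ball $B_j$ of radius comparable to $1/\epsilon$ and argue, via the Tripod Lemma and the $\delta$-hyperbolic inequality, that any curve from $x$ to $y$ must accumulate $\rho$-length at least a fixed multiple of $\rho(w_j)/\epsilon$ at that scale: either $\gamma$ passes through the collar of $B_j$, where $\rho\asymp\rho(w_j)$ by Harnack and the traversed length is $\gtrsim 1/\epsilon$, or it avoids that region, in which case the condition $\epsilon\le\epsilon_0<1$ makes the detour strictly more expensive in $\rho$-length. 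Summing these mutually comparable contributions over the finitely many dominant scales and telescoping would give $l_\rho(\alpha)\le K\,l_\rho(\gamma)$. The delicate points are that $\gamma$ may be arbitrarily wild, so the separating regions $B_j$ must be chosen so that $x$ and $y$ genuinely lie on opposite sides at each scale, and that all additive $\delta$- and $h$-errors produced by \eqref{hroughgeo} and Lemma \ref{allu_jose_02_0013} must be tracked carefully through the summation in order to pin down the admissible range $\epsilon\le\epsilon_0(\delta,h)$ and the final constant $K(\delta,h)$.
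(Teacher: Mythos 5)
A point of reference before the comparison: this paper never actually proves Theorem \ref{allu_jose_02_0002}. It is recalled as ``a slightly modified version of \cite[Theorem 1.3]{allu_jose_}'' and then used as a black box in the proofs of Theorem \ref{uniformization} and Lemma \ref{12}, so the only argument you can be measured against is the BHK-style proof in \cite{allu_jose_}, whose model is \cite[Theorem 5.1]{BHK}. Against that template your architecture is the right one: reduce to rectifiable $\gamma$, treat $\alpha$ as a rough geodesic via \eqref{hroughgeo}, cut $\alpha$ at the Harnack scale $1/\epsilon$, and use hyperbolicity to make the weights decay geometrically away from the central subarc --- in this paper's toolbox that decay is precisely Lemma \ref{lem1}, the estimate $|p-u|\ge |p-z|+|u-z|-8\delta-8h$ along an $h$-short arc, which yields $l_\rho(\alpha)\lesssim \rho(w_0)/\epsilon$ for a central point $w_0$, with the Tripod Lemma \ref{allu_jose_02_0013} controlling the $h$-errors.

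The genuine gap is in your third paragraph, and you flag it yourself without filling it: the matching lower bound $l_\rho(\gamma)\gtrsim \rho(w_0)/\epsilon$ for an \emph{arbitrary} competing curve is asserted, not proved. The dichotomy ``either $\gamma$ crosses the collar of the central ball, or the detour is strictly more expensive'' has no mechanism behind its second half. A curve avoiding $B(w_0,1/\epsilon)$ accumulates length only in regions where $\rho$ may be exponentially smaller than $\rho(w_0)$, so whether the detour is expensive is a competition between two exponentials: the density decays at rate $\epsilon$, while hyperbolicity forces ball-avoiding curves to be long at rate comparable to $1/\delta$ (exponential divergence). Closing this case requires either a quantitative divergence estimate adapted to $h$-short arcs, or the device used in \cite{BHK}: bisect $\gamma$ in $\rho$-length at a point $z$, apply $(x|y)_p \ge \min\left\{(x|z)_p,(z|y)_p\right\}-\delta$, and iterate this descent, gaining a factor $2e^{-\epsilon\delta}>1$ per step until the endpoints of the subcurve are within distance $\sim 1/\epsilon$ of each other, where Harnack finishes. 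Either route shows that the true threshold is $\epsilon\delta$ small, i.e.\ $\epsilon_0\sim 1/\delta$, not ``$\epsilon\le\epsilon_0<1$'' as in your detour step; with $\epsilon_0<1$ alone the detour can genuinely be cheaper (this is exactly what happens in nearly flat spaces, where the theorem fails). A smaller defect: you propose to sum ``mutually comparable contributions over the finitely many dominant scales,'' but different scales can charge the same piece of $\gamma$, so that sum double-counts; it is also unnecessary, since the upper bound for $l_\rho(\alpha)$ is already dominated by the single central term, so one scale suffices.
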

The following lemma from \cite{allu_jose_} is essential to our main result.
\begin{lemma}\label{lem1}
Let $\gamma:x\cra y$ be an $h-$short arc in the metric space $X$ and $p\in X$ be fixed. Let $x_\gamma, y_\gamma\in \gamma$ be points in the curve $\gamma$ such that $l\left(\gamma[x, x_\gamma] \right)= (y|p)_x$ and $l\left(\gamma[y_\gamma, y] \right)= (x|p)_y$. If $z\in \gamma[x, y_\gamma]$ and $u\in \gamma[x, z]$, then
$$|p-u|-|p-z|\geq |u-z|-8\delta-8h.$$
\end{lemma}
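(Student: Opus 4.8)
The plan is to transport the estimate onto an auxiliary $h$-short arc from $x$ to $p$, along which the distance to $p$ is almost monotone. I would first record two structural facts. Since
$$l\big(\gamma[x,x_\gamma]\big)+l\big(\gamma[y_\gamma,y]\big)=(y|p)_x+(x|p)_y=|x-y|$$
and $\gamma$ is $h$-short, the centre $\gamma^{*}=\gamma[x_\gamma,y_\gamma]$ satisfies $l(\gamma^{*})=l(\gamma)-|x-y|\le h$; in particular $x_\gamma$ precedes $y_\gamma$. Hence any point of $\gamma[x,y_\gamma]$ lying beyond $\gamma[x,x_\gamma]$ sits within arclength $h$ of $x_\gamma$. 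Replacing a point $u$ or $z$ lying in $\gamma^{*}$ by $x_\gamma$ changes the relevant distances by at most $h$ each (as $|w-x_\gamma|\le l(\gamma[x_\gamma,w])\le h$ for $w\in\gamma^{*}$), so up to enlarging the additive constant it suffices to prove the inequality when $u,z\in\gamma[x,x_\gamma]$.

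Next I would fix an $h$-short arc $\beta:x\cra p$, available because $X$ is intrinsic, and invoke the Tripod Lemma \ref{allu_jose_02_0013} with $a=x$, $b_1=y$, $b_2=p$, $\alpha_1=\gamma$ and $\alpha_2=\beta$. For $u,z\in\gamma[x,x_\gamma]$ one has $|u-x|,|z-x|\le l(\gamma[x,x_\gamma])=(y|p)_x$, so the hypothesis is met, and the arclength-matched points $u'',z''\in\beta$ defined by $l(\beta[x,u''])=l(\gamma[x,u])$ and $l(\beta[x,z''])=l(\gamma[x,z])$ obey
$$|u-u''|\le 4\delta+2h,\qquad |z-z''|\le 4\delta+2h.$$
These matches are legitimate since $l(\gamma[x,z])\le(y|p)_x\le|x-p|\le l(\beta)$.

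Finally I would use that the distance to $p$ decreases at unit rate along $\beta$ up to error $h$. With $s_u=l(\gamma[x,u])$ and $s_z=l(\gamma[x,z])$, the subarcs $\beta[u'',p]$ and $\beta[z'',p]$ are $h$-short, whence $|p-u''|\ge l(\beta)-s_u-h$ and $|p-z''|\le l(\beta)-s_z$; subtracting gives $|p-u''|-|p-z''|\ge s_z-s_u-h=l(\gamma[u,z])-h\ge|u-z|-h$. Feeding this together with the two tripod bounds into the triangle inequality,
$$|p-u|-|p-z|\ge |p-u''|-|p-z''|-|u-u''|-|z-z''|\ge |u-z|-8\delta-5h,$$
and the reduction of the first step then yields the claimed $|u-z|-8\delta-8h$. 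I expect the only genuinely delicate point to be that reduction: the Tripod Lemma needs $|u-x|,|z-x|\le(y|p)_x$, which can be violated by up to $h$ exactly when $u$ or $z$ falls in the short centre $\gamma^{*}$, and it is this manoeuvre of retreating to $x_\gamma$ that relaxes the sharp constant $8\delta+5h$ to $8\delta+8h$.
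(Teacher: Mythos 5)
Your proof is correct. One point of order first: this paper never proves Lemma \ref{lem1} --- it is imported verbatim from \cite{allu_jose_} ("The following lemma from \cite{allu_jose_} is essential...") --- so there is no in-paper proof to compare against, and I assess your argument on its own terms. It uses exactly the toolkit the paper sets up, and the key steps all check out: the identity $(y|p)_x+(x|p)_y=|x-y|$ does force $l(\gamma^*)=l(\gamma)-|x-y|\le h$; the hypothesis $|u-x|,\,|z-x|\le (y|p)_x$ of the Tripod Lemma \ref{allu_jose_02_0013} is met for $u,z\in\gamma[x,x_\gamma]$, and the arclength-matched points $u'',z''$ on the auxiliary arc $\beta:x\cra p$ exist because $(y|p)_x\le |x-p|\le l(\beta)$; the $h$-shortness of the subarc $\beta[u'',p]$ gives $|p-u''|-|p-z''|\ge s_z-s_u-h=l(\gamma[u,z])-h\ge |u-z|-h$; and the triangle inequality then yields the constant $8\delta+5h$ on $\gamma[x,x_\gamma]$. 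The one place you should be more explicit is the bookkeeping of your reduction: replacing a point of $\gamma^*$ by $x_\gamma$ costs $2h$ (an $h$ in the $|p-\cdot|$ term and an $h$ in the $|u-z|$ term), so the single-replacement case $u\in\gamma[x,x_\gamma]$, $z\in\gamma^*$ indeed gives $8\delta+5h+2h=8\delta+7h\le 8\delta+8h$; but if both $u$ and $z$ lie in $\gamma^*$, the same accounting would give $8\delta+9h$, which exceeds the stated constant. That case must instead be dispatched directly: there $|u-z|\le l(\gamma^*)\le h$ and $|p-u|-|p-z|\ge -|u-z|\ge -h$, so the inequality holds trivially with room to spare. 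With that one-line case split made explicit (note also that $u\in\gamma[x,z]$ rules out the fourth configuration $u\in\gamma^*$, $z\in\gamma[x,x_\gamma]$), the claimed constant $8\delta+8h$ is safe in all cases; this is a presentational fix, not a gap.
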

\section{Uniformization}
To show that the deformed space is incomplete we need the following crucial lemma.
\begin{lemma}\label{non-completeness lemma}
Let $X$ be an intrinsic $\delta-$hyperbolic metric space and $\bar{\alpha} : o \cra \zeta,\,\, \alpha_i :o \cra u_i $ be a $(\mu, h)$-road, where $o\in X$ and $\zeta \in \delta_\infty X$. 
Then, for $n\leq m $, there exists a continuous $(1, K)$-rough quasi-geodesic from $o$ to $u_m$ that passes through $u_n$, where the constant $K$ depends only on $\mu$ and $h$.
\end{lemma}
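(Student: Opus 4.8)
The plan is to build the required curve by splicing together the arc $\alpha_n$, a short connecting arc, and the tail end of $\alpha_m$, and then to verify that this concatenation is a rough geodesic by comparing it against the $h$-short arc $\alpha_m$, which is itself a rough geodesic by \eqref{hroughgeo}.

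First I would record the relevant road data. Write $\psi_n:[0,L_n]\to X$ and $\psi_m:[0,L_m]\to X$ for the arclength parametrizations of $\alpha_n$ and $\alpha_m$, where $L_n=l(\alpha_n)\le L_m=l(\alpha_m)$ by the monotonicity condition for a road together with $n\le m$. Since $g_{nm}$ is the length map fixing $o$, the endpoint $u_n$ (which sits at arclength $L_n$ on $\alpha_n$) is sent to $w:=g_{nm}(u_n)=\psi_m(L_n)$, and the fellow-traveling condition gives $|w-u_n|\le\mu$. More generally $g_{nm}(\psi_n(s))=\psi_m(s)$ for $s\in[0,L_n]$, so $|\psi_n(s)-\psi_m(s)|\le\mu$ throughout. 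Because $X$ is intrinsic I may choose an $h$-short arc $\beta:u_n\cra w$ with $l(\beta)\le|w-u_n|+h\le\mu+h$. Concatenating $\alpha_n$, then $\beta$, then the subarc $\alpha_m[w,u_m]=\psi_m([L_n,L_m])$ produces a continuous curve $\sigma$ from $o$ to $u_m$ that passes through $u_n$; after arclength reparametrization I obtain a continuous $\psi:[0,T]\to X$ with $T=L_m+l(\beta)$.

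The upper bound $|\psi(s)-\psi(t)|\le|s-t|$ is immediate from arclength parametrization, so the content lies in the lower bound. I would introduce the monotone comparison map $\phi:[0,T]\to[0,L_m]$ which equals $s$ on the $\alpha_n$-part, is constant equal to $L_n$ on the $\beta$-part, and equals $s-l(\beta)$ on the $\alpha_m$-tail. Two estimates then drive the argument: first, $|\psi(s)-\psi_m(\phi(s))|\le\mu+h$ for every $s$, which on the $\alpha_n$-part is the fellow-traveling bound $|\psi_n(s)-\psi_m(s)|\le\mu$ and on the $\beta$-part follows from $l(\beta)\le\mu+h$ since there $\psi_m(\phi(s))=w$; second, since $\phi$ has a single flat segment of length $l(\beta)\le\mu+h$, it satisfies $|\phi(s)-\phi(t)|\ge|s-t|-(\mu+h)$. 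Feeding these into the $h$-short estimate \eqref{hroughgeo} for $\psi_m$ yields
\[
|\psi(s)-\psi(t)|\ge|\psi_m(\phi(s))-\psi_m(\phi(t))|-2(\mu+h)\ge|\phi(s)-\phi(t)|-h-2(\mu+h)\ge|s-t|-3\mu-4h,
\]
so $\psi$ is a continuous $(1,K)$-rough quasi-geodesic with $K=3\mu+4h$, a constant depending only on $\mu$ and $h$.

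The only genuinely delicate point is the passage across the splice: I must be sure that the short connector $\beta$ and the switch from $\alpha_n$ to $\alpha_m$ do not introduce a long backtrack that would destroy the lower bound. This is exactly what the fellow-traveling property of the road controls, and it is the reason the entire curve stays within $\mu+h$ of the single rough geodesic $\alpha_m$; that uniform bound is then absorbed into $K$. The degenerate case $n=m$ needs no work, since there $w=u_n=u_m$, $\beta$ is trivial, and $\sigma=\alpha_m$ is already $h$-short.
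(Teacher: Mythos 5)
Your proof is correct and follows essentially the same route as the paper: the identical splice of $\alpha_n$, an $h$-short arc $\beta\colon u_n \cra g_{nm}u_n$ of length at most $\mu+h$, and the tail $\alpha_m[L_n,L_m]$, with the lower rough-geodesic bound coming from the fact that the whole spliced curve fellow-travels the rough geodesic $\alpha_m$. The only difference is presentational: the paper verifies the lower bound by a three-case triangle-inequality analysis (obtaining $K=3\mu+3h$), whereas you package the same estimates into the single comparison map $\phi$ (obtaining $K=3\mu+4h$); both constants depend only on $\mu$ and $h$, as required.
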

\begin{proof}
Let $g_{nm}:\alpha_n \rightarrow \alpha_m $ be the length map with $|g_{nm}x-x| \leq \mu $ for all $x\in \alpha_n$. Choose an $h-$short arc $\beta : u_n \cra g_{nm}u_n. $ Set $\lambda = l(\beta) $. Then, $\lambda  \leq \mu + h$. Further, we assume that $\alpha_n:[0, L_n] \ra X, \alpha_m:[0, L_m] \ra X$ and $\beta :[0, \lambda] \ra X$ are the arc length parametrizations of the curves. We define arc $\gamma: o\cra u_m $ as the concatenation of the curves $\alpha_n, \beta $ and $\alpha_m[L_n, L_m]$.\\[2mm]
That is, define $\gamma : [0, L_m+\lambda]\ra X $ as follows.
\[\gamma(t)=
\begin{cases}
\alpha_n(t), & 0\leq t < L_n\\
\beta(t-L_n), & L_n\leq t < L_n+\lambda\\
\alpha_m(t-\lambda),&  L_n+\lambda \leq t \leq L_m+\lambda.
\end{cases}
\]
Note that $\gamma $ is continuous as $\beta(\lambda) = g_{nm}u_n = \alpha_m (L_n)$, since $g_{nm}$ is a length map. Recall that the arclength parametrization of an $h-$short arc satisfies rough quasi geodesic condition (see Definition \ref{hroughgeo}).\\[2mm]
To show that the curve $\gamma$ is $(1, K)$-rough quasi-geodesic  we need to show that
\[
|s-t|-K \leq |\gamma(s) -\gamma(t)| \leq |s-t|+K.
\]
But, since $\gamma$ is arclength parametrized we always have 
\[|\gamma(s) -\gamma(t)| \leq l(\gamma(s, t)) \leq |s-t|.\]
Therefore, it suffices to show that  $|\gamma(s) -\gamma(t)|\geq  t-s-K $ whenever   $s\leq t $. We consider three cases.\\[2mm]
\textit{Case 1:} $s\in [0, L_n), t\in [L_n, L_n+\lambda)$. Then,
\begin{align*}
|\gamma(s)-\gamma(t)|& = |\alpha_n(s) - \beta(t-L_n)|\\
&\geq |\alpha_n(s) - \alpha_n(L_n) | -|\alpha_n(L_n)-\alpha_m(L_n)|- | \alpha_m(L_n) - \beta(t-L_n)|\\
& \geq L_n-s-h-\mu -\lambda\\
&\geq t-s -\mu -h-\lambda +L_n -t \\
&\geq t-s -\mu -h-2\lambda \\
&\geq  t-s-3\mu -3h,
\end{align*}
where in the last step we used the fact that  $\lambda  \leq \mu + h$.\\[2mm]
\textit{Case 2:} $s\in [0, L_n)$ and $t\in [L_n+\lambda, L_m+\lambda]$. Then,
\begin{align*}
|\gamma(s)-\gamma(t)|&\geq |\alpha_m(s)-\alpha_m(t-\lambda)|- |\alpha_n(s) -\alpha_m(s)|\\
&\geq l\left(\alpha_m\left(s, t-\lambda\right)\right) -h -\mu \\
&\geq t-\lambda-s-h-\mu \\
&\geq t-s -2\mu-2h .
\end{align*}
\textit{Case 3:} $s\in [L_n, L_n+\lambda) $ and $t\in [L_n+\lambda, L_m+\lambda]$. We compute,
\begin{align*}
|\gamma(s) -\gamma(t)| &= |\beta(s-L_n) - \alpha_m(t-\lambda)|\\
&\geq |\alpha_m(L_n)-\alpha_m(t-\lambda)| -|\alpha_m(L_n) - \beta(0)| -|\beta(s-L_n) -\beta (0)|\\
&\geq t-\lambda-L_n-h-\mu -s+L_n \\
&\geq t-s -2\mu-2h.
\end{align*}
In all the other cases, it follows from the rough quasi geodesic property of the $h-$short arcs that
$$|s-t|-h \leq |\gamma(s)-\gamma(t)| \leq |s-t|.$$
Therefore,in all cases we obtain 
$$ t-s-3\mu -3h \leq |\gamma(s) -\gamma(t)| \leq |s-t|.$$
Thus, the curve $\gamma$ is a $(1, K)$ rough quasi-geodesic with $K= 3\mu +3h$. Hence, the lemma follows.
\end{proof}

\begin{proof}[Proof of Theorem \ref{uniformization}]
We first prove that $X_\epsilon$ is a bounded metric space. Let $x\in X$ and let $\alpha$ be a $1-$short arc joining $x$ and $p$, further assume that $\alpha $ is parametrized by arc length. Then,
$$d_\epsilon(x, p) \leq \int_\alpha \rho_\epsilon \ds \leq \int_0^\infty \exp\left(-\epsilon \left(l\left(\alpha[\alpha(t), p]\right)-1\right)\right) \dt = e^\epsilon \int_0^\infty e^{-\epsilon t} \dt = \frac{e^\epsilon}{\epsilon} ,$$
which yields us 
$$\diam X_\epsilon \leq \frac{2e^\epsilon}{\epsilon},$$
and hence $X_\epsilon$ is bounded.\\[2mm]
Our next aim is to prove that $X_\epsilon$ is a rectifiably connected metric space. 
To this end, we intend to show that the identity map $\left(X, |x-y|\right)\rightarrow \left(X, d_\epsilon\right)$ is locally bilipschitz.  
Let $w\in X$ be fixed and consider the open ball $B(w, 1)$. Then, for every $z\in B(w, 1)$, by virtue of \eqref{harnack}, we obtain
$$e^{-\epsilon}\rho_\epsilon (w)\leq \rho_\epsilon (z) \leq e^{\epsilon}\rho_\epsilon(w).$$
Now, for $x, y\in B(w, 1)$, for an arc   $\alpha: x\cra y$ satisfying $l(\alpha)\leq 2|x-y|$, and  for any $u\in \alpha$ we have by \eqref{harnack} that,
$$\rho_\epsilon(u) \leq \exp\left(\epsilon |x-u|\right)\rho_\epsilon(x)\leq e^{4\epsilon}\rho_\epsilon(x) \leq e^{4\epsilon+\epsilon} \rho_\epsilon (w).$$
Thus, for $x, y\in B(w, 1)$, this implies that,
$$d_\epsilon(x, y) \leq \int_\alpha \rho_\epsilon \ds \leq 2e^{4\epsilon+\epsilon} \rho_\epsilon(w) |x-y|.$$
Now, for any curve $\gamma$ joining $x$ and $y$ we can find a subcurve $\gamma'$ of $\gamma$ emanating from $x$ such that $l(\gamma')=|x-y|$. Therefore, by \eqref{harnack} one computes,
\begin{align*}
d_\epsilon(x, y) &= \inf_\gamma\int_\gamma \rho_\epsilon \ds \geq \inf_\gamma\int_{\gamma'} \rho_\epsilon \ds\\
&\geq \int_0^{|x-y|}\rho_\epsilon(x) e^{-\epsilon t}\dt\\
&\geq \rho_\epsilon(x) e^{-4\epsilon}|x-y|\\
&\geq e^{-4\epsilon-\epsilon}\rho_\epsilon(w).
\end{align*}
Hence, we see that the identity map is locally bilipschitz and thus $X_\epsilon$ is a rectifiably connected metric space. \\[2mm]
Next, we show that $X_\epsilon$ is incomplete. Let $\zeta \in \delta_\infty X $ and $\bar{\alpha}: p \cra \zeta$; $\alpha_n:w\cra u_n$, be a $(4\delta+2h, h)-$road, where $h>0$. We wish to show that $\left\{u_n\right\}$ is a Cauchy sequence in the metric space $(X_\epsilon, d_\epsilon)$. For $n\leq m $ by Lemma \ref{non-completeness lemma}, we obtain a continuous $(1, K)$- rough quasi-geodesic $\gamma: p\cra u_m$  such that $u_n\in \gamma$, where $K$ depends only on $\delta$ and $h$. It is clear from the construction that $\gamma$ is arclength parametrized. We assume that $\gamma:[0, T] \rightarrow X$ is the parametrization with $\gamma(0)= p, \gamma(T)=u_m$ and $\gamma(t_n)=u_n$ where $t_n\in (0, T)$. \\
Now, we compute
\begin{align*}
d_\epsilon(u_n, u_m) &\leq \int_{\gamma|_{[t_n, T]}} \rho_\epsilon ds \\
&\leq \int_{t_n}^T e^{-\epsilon t+ \epsilon K} dt\\
&\leq \frac{1}{\epsilon} e^{\epsilon K } e^{-\epsilon t_n}.
\end{align*}
It is evident from the construction of the rough quasigeodesic, that $t_n= l(\alpha_n) \rightarrow \infty $ as $n \rightarrow \infty $.  Thus, it follows that $\{u_n\} $ is $d_\epsilon$-Cauchy. Since the spaces $X$ and $X_\epsilon$ are locally bilipschitz, it is easy to see that the sequence $\{u_n\} $ cannot converge to a point in $X_\epsilon$. Hence, it follows that $X_\epsilon$ is incomplete.
\\[2mm]
For any $y\in X$ satisfying $|x-y|>\frac{1}{\epsilon}$, and for any rectifiable curve $\gamma$ joining them we have a subcurve $\gamma'$ of $\gamma$ originating from $x$ with $l(\gamma')=1/\epsilon$. Then by \eqref{harnack} we compute,
\begin{align*}
d_\epsilon(x, y) &= \inf_\gamma \int_\gamma \rho_\epsilon \ds \geq \inf_\gamma \int_{\gamma'}  \rho_\epsilon \ds\\
& \geq \int_0^{\frac{1}{\epsilon}} \rho_\epsilon(x) e^{-\epsilon t} \dt \geq \left( 1-e^{-1} \right)\frac{\rho_\epsilon(x)}{\epsilon}\\
&\geq \frac{\rho_\epsilon(x)}{e\epsilon}.
\end{align*}
Therefore, for each $x\in X$, we find that 
\begin{equation}\label{bdry}
d_\epsilon(x) = d_\epsilon\left(x, \partial_\epsilon X\right) \geq \frac{\rho_\epsilon(x)}{e\epsilon},
\end{equation}
where $\partial_\epsilon X = \partial X_\epsilon = \bar{X_\epsilon}\setminus X_\epsilon.$\\[2mm]
We are now equipped to prove that $X_\epsilon$ is indeed a uniform space. The quasiconvexity of the arc follows from Theorem \ref{allu_jose_02_0002}. 
Therefore, it suffices to prove that every $h-$short arc satisfies the double cone arc condition. Let $x, y\in X_\epsilon$ and let $\gamma:x\cra y$ be an $h-$short arc joining them. 
Without loss of generality, we may suppose that $z\in \gamma_a\cup \gamma*= \gamma[x, y_\gamma]$, then by Lemma \ref{lem1}, we have
 $$|p-u|\geq |p-z|+|u-z|-8\delta-8h,$$
 for every $u\in \gamma[x, z]$.
Therefore, by \eqref{bdry} we obtain
\begin{align*}
l_\epsilon\left(\gamma\right) &= \int_{\gamma[x, z]} \rho_\epsilon(u) |du|\\
&= \int_{\gamma[x, z]} \exp\left(-\epsilon|p-u|\right) |du| \\
&\leq \exp\left(\delta\epsilon+8h\epsilon\right) \exp \left(-\epsilon|p-z|\right) \int_{\gamma[x, z]} \exp \left(-\epsilon|u-z|\right) |du|\\
&\leq \exp\left(\delta\epsilon+8h\epsilon\right) \rho_\epsilon(z) \int_0^{\infty} e^{h\epsilon} e^{-\epsilon t} dt\\
&= \frac{\exp\left(\delta\epsilon+8h\epsilon\right)}{\epsilon} \rho_\epsilon(z) \\
& \leq \exp\left(\delta\epsilon+9h\epsilon+1\right) d_\epsilon(z),
\end{align*}
which completes the proof.
\end{proof}
To show there exists a natural boundary identification between the Gromov boundary and the metric boundary we establish the following crucial lemma.
\begin{lemma}\label{12}
Let $X$ be an intrinsic $\delta-$hyperbolic space and let $X_\epsilon$ be its uniformization for $0<\epsilon \leq \epsilon_0$. Then there is a constant $C\geq 1$ such that 
\be \label{12eq}
\frac{1}{C}d_\epsilon(x, y) \leq \frac{e^{-\epsilon(x|y)_p}}{\epsilon} \min \left(1/2, \epsilon |x-y|\right) \leq C d_\epsilon(x, y)
\ee
for every $x, y \in X$.
\end{lemma}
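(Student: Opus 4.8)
The plan is to reduce both inequalities in \eqref{12eq} to estimating the $\rho_\epsilon$-length of a single $h$-short arc $\alpha\in\mathcal A$ joining $x$ and $y$ (so that $l(\alpha)\le 2|x-y|$), and then to estimate that length by subdividing $\alpha$ along the triangle it forms with $p$. For the upper bound on $d_\epsilon$ I would simply use $d_\epsilon(x,y)\le l_\epsilon(\alpha)$; for the lower bound I would invoke the Gehring--Hayman inequality of Theorem \ref{allu_jose_02_0002}, which gives $l_\epsilon(\alpha)\le K\,l_\epsilon(\gamma)$ for every competitor curve $\gamma$ with the same endpoints and hence, after taking the infimum, $d_\epsilon(x,y)\ge l_\epsilon(\alpha)/K$. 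Writing $M:=\tfrac{e^{-\epsilon(x|y)_p}}{\epsilon}\min(1/2,\epsilon|x-y|)$, it therefore suffices to prove that $l_\epsilon(\alpha)$ is comparable to $M$ with constants depending only on $\delta$ and $h$.

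The geometric heart is the subdivision $\alpha=\alpha_x\cup\alpha^*\cup\alpha_y$ induced by $p$, where $l(\alpha_x)=(y|p)_x$, $l(\alpha_y)=(x|p)_y$, so that $l(\alpha_x)+l(\alpha_y)=|x-y|$ and $l(\alpha^*)=l(\alpha)-|x-y|\le\min(h,|x-y|)$ for $\alpha\in\mathcal A$. Using Lemma \ref{lem1} together with the triangle inequality I would first establish the two-sided estimate $|p-x_\alpha|=(x|y)_p+O(\delta+h)$, so that $\rho_\epsilon\approx e^{-\epsilon(x|y)_p}$ on the center $\alpha^*$. Parametrizing each leg by arclength $t$ measured from $x_\alpha$ (resp.\ $y_\alpha$), Lemma \ref{lem1} yields $|p-u|\ge (x|y)_p+t-O(\delta+h)$, whence $\rho_\epsilon(u)\le e^{-\epsilon(x|y)_p}e^{-\epsilon t}e^{O(\epsilon(\delta+h))}$; integrating over a leg of length $L_i$ produces the factor $\int_0^{L_i}e^{-\epsilon t}\,dt\le\min(1/\epsilon,L_i)\le\min(1/\epsilon,|x-y|)$, since $L_i\le|x-y|$. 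Summing the two legs and the center, and using $l(\alpha^*)\le\min(h,|x-y|)$ together with $\epsilon\le\epsilon_0\le 1$ and $h<1/13$, gives $l_\epsilon(\alpha)\lesssim e^{-\epsilon(x|y)_p}\min(1/\epsilon,|x-y|)\lesssim M$; the crucial point is that the bound $l(\alpha^*)\le\min(h,|x-y|)$ keeps the center contribution $\lesssim M$ even when $|x-y|$ is tiny.

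For the lower bound I would retain only the longer leg. Since $l(\alpha_x)+l(\alpha_y)=|x-y|$, one of them, say $\alpha_x$, has length $L_1\ge|x-y|/2$. On this leg the triangle inequality combined with $|p-x_\alpha|\le(x|y)_p+O(\delta+h)$ gives the reverse bound $|p-u|\le(x|y)_p+t+O(\delta+h)$, so $\rho_\epsilon(u)\ge e^{-\epsilon(x|y)_p}e^{-O(\epsilon(\delta+h))}e^{-\epsilon t}$ and
\[
l_\epsilon(\alpha)\ge\int_{\alpha_x}\rho_\epsilon\,ds\ge e^{-\epsilon(x|y)_p}\,e^{-O(\epsilon(\delta+h))}\,\frac{1-e^{-\epsilon L_1}}{\epsilon}.
\]
I would finish with the elementary inequalities $1-e^{-s}\ge s/2$ for $0\le s\le1$ and $1-e^{-s}\ge 1-e^{-1/4}$ for $s\ge1/4$: when $\epsilon|x-y|<1/2$ we have $\epsilon L_1\le1$, so the factor is $\gtrsim L_1\gtrsim|x-y|$; when $\epsilon|x-y|\ge1/2$ we have $\epsilon L_1\ge1/4$, so the factor is $\gtrsim1/\epsilon$. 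In both cases $\tfrac{1-e^{-\epsilon L_1}}{\epsilon}\gtrsim\tfrac{1}{\epsilon}\min(1/2,\epsilon|x-y|)$, giving $l_\epsilon(\alpha)\gtrsim M$ and hence $d_\epsilon(x,y)\ge M/C$.

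The main obstacle, and the place where care is needed, is obtaining a single constant $C$ that works uniformly across the two regimes according as $\epsilon|x-y|$ is below or above $1/2$: reconciling $\min(1/\epsilon,|x-y|)$ with $\tfrac1\epsilon\min(1/2,\epsilon|x-y|)$, controlling the center term through $l(\alpha^*)\le\min(h,|x-y|)$ in the small-separation case, and splitting the estimate of $1-e^{-\epsilon L_1}$ are precisely the points where the two occurrences of $\min$ in \eqref{12eq} must be matched. All of the hyperbolicity enters only through Lemma \ref{lem1} (and, for the lower bound, through Theorem \ref{allu_jose_02_0002}), so once this arclength bookkeeping is arranged the resulting constant depends only on $\delta$ and $h$.
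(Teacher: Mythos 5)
Your proposal is correct, and its skeleton matches the paper's: both reduce the two inequalities to estimating $l_\epsilon$ of an $h$-short arc $\gamma\in\mathcal{A}$ (using $d_\epsilon\le l_\epsilon(\gamma)$ one way and Theorem \ref{allu_jose_02_0002} the other way), both anchor the density at the subdivision point $x_\gamma$ via the two-sided estimate $\bigl||p-x_\gamma|-(x|y)_p\bigr|\le C(\delta+h)$, and both use Lemma \ref{lem1} to get exponential decay of $\rho_\epsilon$ along the arc. Where you differ is in two organizational choices. First, the paper proves the anchor estimate with the Tripod Lemma \ref{allu_jose_02_0013}, introducing an auxiliary $h$-short arc $\alpha:x\cra p$ and comparing $x_\gamma$ with the corresponding point $x_\alpha$ on $\alpha$; you instead get the lower bound $|p-x_\gamma|\ge(x|y)_p$ for free from the triangle inequality (since $|p-x_\gamma|\ge|p-x|-(y|p)_x=(x|y)_p$) and the upper bound from Lemma \ref{lem1} applied with $u=x$, $z=x_\gamma$, which is more economical and avoids the auxiliary arc entirely. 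Second, the paper splits into the cases $\epsilon|x-y|\le 1/2$ and $\epsilon|x-y|>1/2$ at the outset and uses different mechanisms in each (in the small case the Harnack inequality \eqref{harnack} makes $\rho_\epsilon$ comparable to $\rho_\epsilon(x_\gamma)$ along the whole arc, so no decay estimate is needed; in the large case it integrates the decay), whereas you run a single unified estimate—legs give $\min(1/\epsilon,L_i)$, the center is controlled by $l(\gamma^*)\le\min(h,|x-y|)$, the longer leg gives $(1-e^{-\epsilon L_1})/\epsilon$—and defer the case split to the final elementary comparison of $1-e^{-s}$ with $\min(1/2,s)$. Your version buys a cleaner matching of the two occurrences of $\min$ in \eqref{12eq} and one fewer imported lemma; the paper's version buys shorter computations in each case, since in the small-separation regime the density is simply constant up to a factor $e$. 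Both yield a constant depending only on $\delta$ and $h$ (hence only on $\delta$ after fixing, say, $h=1/14$), so the proposal is a valid alternative proof.
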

\begin{proof}
For $x, y\in X$ let $\gamma : x\cra y$ be an $h-$short arc and let $x_\gamma$ and $y_\gamma$ be the points in $\gamma$ induced by $p\in X$. 
Further, choose an $h-$ short arc $\alpha : x\cra p $ and $x_\alpha$ and $p_\alpha$ be the points induced by $y\in X$. \\
By invoking Tripod Lemma \ref{allu_jose_02_0013}, we obtain
$$|x_\gamma - x_\alpha| \leq 4\delta +2h.$$
Thus,
\begin{align*}
|x_\gamma - p | &\leq |x_\gamma-x_\alpha|+ |x_\alpha-p|\\
&\leq 4\delta +2h + l\left(\alpha[x_\alpha, p_\alpha]\right) + l\left(\alpha[p_\alpha, p]\right)\\
&\leq  4\delta+3h +(x|y)_p,
\end{align*}
which yields us
\be \label{12.1}
|x_\gamma - p |-(x|y)_p \leq 4\delta+3h .
\ee
Now, 
\begin{align*}
(x|y)_p  &= l\left(\alpha[p_\alpha, p]\right)\\
& \leq |p_\alpha-p|+h\\
&\leq |p-x_\gamma| +|x_\gamma - x_\alpha|+|x_\alpha-p_\alpha|+h\\
&\leq |p-x_\gamma|+4\delta+4h
\end{align*}
which implies that
\be \label{12.2}
(x|y)_p-|p-x_\gamma| \leq 4\delta+4h.
\ee
Therefore, \eqref{12.1} and \eqref{12.2} together yields us
\be \label{12.3}
\left| |p-x_\gamma| - (x|y)_p\right| \leq 4\delta+4h.
\ee
Furthermore, we assume that $\gamma: x\cra y$ is an $h-$short arc satisfying $l(\gamma)\leq 2|x-y|$. Note that we can always choose such a curve for any $h>0$. 
To see that, let $$c=\min\left\{ 2, 1+\frac{h}{|x-y|}\right\}.$$ 
Then clearly $c>1$. Since, the space is intrinsic one always can choose a curve $\gamma: x\cra y$ such that $l(\gamma)\leq c |x-y|$. 
It is easy to verify that $\gamma$ is an $h-$short arc which satisfies $l(\gamma)\leq 2|x-y|$.\\[2mm]
Now, we consider two cases. \\[2mm]
If $\epsilon|x-y| \leq \frac{1}{2}$, then for every $u\in \gamma$ we have
\be \label{12.4}
\epsilon|x_\gamma-u| \leq \epsilon l(\gamma) \leq 2\epsilon |x-y| \leq 1.
\ee
Therefore, by \eqref{harnack} and \eqref{12.4}  we have
\begin{align*}
\frac{1}{e}\rho_\epsilon(x_\gamma) &\leq e^{-|x_\gamma - u|} \rho_\epsilon(x_\gamma) \\
&\leq \rho_\epsilon(u) \leq e^{\epsilon|x_\gamma -u|} \rho_\epsilon(x_\gamma) \\&\leq e \rho_\epsilon(x_\gamma).
\end{align*}
This implies that
$$d_\epsilon(x, y) \leq \int_\gamma \rho_\epsilon(u) |du| \leq 2 e \rho_\epsilon(x_\gamma) |x-y|.$$ 
To obtain the reverse inequality of \eqref{12eq} we make use of the Gehring-Hayman theorem (Theorem \ref{allu_jose_02_0002}), for that we may assume that $\gamma:x\cra y$ is an $h-$short arc with $h<\frac{1}{13}$. 

Therefore, we obtain
$$d_\epsilon(x, y) \geq \frac{1}{K} \int_\gamma \rho_\epsilon(u) |du| \geq \frac{1}{eK}\rho_\epsilon(x_\gamma) |x-y|,$$
where the constant $K$ depends only on $\delta$ and $h$. One can always choose $h=1/14$ to obtain the constant $K$ as depending only on $\delta$. Hence in this case the assertion of Lemma \ref{12} holds.\\[2mm]
Now, if $\epsilon|x-y|>1/2$, then by Lemma \ref{lem1}, we have
$$|p-u|\geq |p-z|+|z-u|-8\delta-8h$$
for all $x\in \gamma [x, x\gamma]$ and $u\in \gamma[x, z]$. In particular, for $z=x_\gamma$ we have 
$$|p-u|\geq |p-x_\gamma|+|x_\gamma-u|-8\delta-8h$$
for all $u\in \gamma[x, x_\gamma]$. 
Also, by symmetry, we have
$$|p-u|\geq |p-x_\gamma|+|x_\gamma -u|-8\delta-8h$$
for all $u\in \gamma[x_\gamma, y]$.\\
Therefore, we conclude that
$$|p-u|\geq |p-x_\gamma|+|x_\gamma-u|-8\delta-8h$$
for all $u\in \gamma$.
Thus, for all $u\in \gamma$, we obtain
$$e^{-\epsilon|p-u|}\leq e^{8\delta \epsilon} e^{8h\epsilon} \rho_\epsilon(x_\gamma) e^{-\epsilon |x_\gamma-u|}.$$ 
For easy computation, we use a different interval for the arc length parametrization for the arc $\gamma:x\cra y$. 
Let $L=l(\gamma)$ and suppose that $\gamma:[0, L] \rightarrow X$ is the arclength parametrization of $\gamma$. 
Let $t_0\in [0, L]$ be such that $\gamma(t_0)=x_\gamma$. 
Then, it is possible to have a parametrization of the arc $\gamma$ from the interval $[-t_0, L-t_0]$ such that $0$ maps to $x_\gamma$. For simplicity, we also denote this parametrization by $\gamma$.
Now, $\gamma : [-t_0, L-t_0] \rightarrow X$ is an arclength parametrization of the arc such that $\gamma(0)=x_\gamma$.\\[2mm]
Now, we compute
\begin{align*}
d_\epsilon(x, y) &\leq \int_\gamma \rho_\epsilon|du|\\
&\leq e^{8\delta\epsilon}e^{8h\epsilon}\rho_\epsilon(x_\gamma)\int_\gamma e^{-\epsilon |x_\gamma-u|} |du|\\
&\leq 2 e^{8\delta\epsilon}e^{8h\epsilon}\rho_\epsilon(x_\gamma)\int_{\gamma[x_\gamma, y]} e^{-\epsilon |x_\gamma-u|} |du|\\
&\leq 2 e^{8\delta\epsilon}e^{8h\epsilon}\rho_\epsilon(x_\gamma)\int_{\gamma[x_\gamma, y]} e^{-\epsilon \left(l(\gamma[x_\gamma, u]-h\right)} |du|\\
&\leq 2 e^{8\delta\epsilon}e^{9h\epsilon}\rho_\epsilon(x_\gamma)\int_0^\infty e^{-\epsilon t} dt\\
&\leq \frac{2}{\epsilon} e^{8\delta\epsilon}e^{9h\epsilon}\rho_\epsilon(x_\gamma)
\end{align*}
On the other hand, we have
$$|p-u|\leq |p-x_\gamma|+|x_\gamma-u|,$$
thereby we obtain 
$$e^{-\epsilon|p-u|} \geq \rho_\epsilon(x_\gamma) e^{\epsilon|x_\gamma-u|}$$
for every $u\in \gamma$.
Now, without loss of generality we may assume that $h<1/13$ and thus, by employing Gehring-Hayman theorem (Theorem \ref{allu_jose_02_0002}), we obtain
\begin{align*}
d_\epsilon(x, y) &\geq \frac{1}{K}\int_\gamma\rho_\epsilon|du|\\
&\geq \frac{1}{K} \rho_\epsilon(x_\gamma) \int_\gamma e^{-\epsilon |x_\gamma-u|} |du|\\
&\geq \frac{1}{K} \rho_\epsilon(x_\gamma) \int_0^{|x-y|/2} e^{-\epsilon t} dt\\
&\geq \frac{\rho_\epsilon(x_\gamma)}{K\epsilon} \left(1-e^{-1/4}\right).
\end{align*}
Thus, the lemma holds.
\end{proof}
\begin{proof}[Proof of Theorem \ref{thm2}]
Let $\{u_n\}$ be a Gromov sequence in $X$. Then, $\left(u_n|u_m\right)_p \rightarrow \infty $ as $n, m\rightarrow\infty$. 
Thus, it follows from \eqref{12eq} that $\{u_n\}$ is $d_\epsilon$-Cauchy. 
Now, since the identity map between $X$ and $X_\epsilon$ is locally bilipschitz, it is easy to see that $\{u_n\}$ cannot converge to a point in $X_\epsilon$.
Therefore, $\{u_n\}$ converges to a point in $\delta_\epsilon X$. 
Moreover, if $\{u_n\}$ and $\{v_n\}$ are two equivalent Gromov sequences, then by definition we have, $(u_n|v_n)_p \rightarrow \infty$. 
Therefore, an application of \eqref{12eq} shows that $\{u_n\}$ and $\{v_n\}$ are equivalent Cauchy sequences in $X_\epsilon$, \textit{i.e.,} $d_\epsilon(u_n, v_n) \rightarrow 0$ as $n\rightarrow \infty$.  
Hence, the map which sends an equivalence class of Gromov sequences to the point in $\delta_\epsilon X$ to which the sequence convergence in the $d_\epsilon$-metric is well defined. \\[2mm]
To see that the map is injective we make use of \eqref{12eq} again. Suppose, $\{u_n\}$ and $\{v_n\}$  are two sequences whose image under the map is same, which means that $\{u_n\}$ and $\{v_n\}$ are equivalent Cauchy sequences in the metric $d_\epsilon$.
Thus, \eqref{12eq} again implies that $$e^{-\epsilon(u_n|v_n)_p} \min \left(1/2, \epsilon |u_n-v_n|\right)\rightarrow 0.$$
If $e^{-\epsilon(u_n|v_n)_p}\rightarrow 0$, then it would imply that $(u_n|v_n)_p \rightarrow \infty$ and hence $\{u_n\}$ and $\{v_n\}$ are equivalent Gromov sequences and belong to the same equivalence class.
If $|u_n-v_n|\rightarrow 0$ then from the definition of $(u_n|v_n)_p$ it follows that $(u_n|v_n)_p \rightarrow \infty.$ 
Thus, in both the cases we see that no two non-equivalent Gromov sequences can be send to same point under the map. Therefore, the map is injective.
By following a similar argument as above, one can see that the map is surjective as well.
Therefore, we obtain a natural identification of the Gromov boundary of $X$ and the metric boundary of $X_\epsilon$.
\\[2mm]
For $x, y\in \delta_\infty X$ we define $|x-y|=0$ if $x= y$ and $|x-y|=\infty $
if $x\neq y$. Then, it follows from \eqref{12eq}, \eqref{tau}, and \eqref{metametric} that the map is a quasi-isometry from $\left(\delta_\infty X, \theta_{\epsilon, o}\right) $ to $\left(\delta_\epsilon X, d_\epsilon\right)$.
\end{proof}

\noindent{\bf Acknowledgement.}
The first author thanks SERB-CRG and the second author's research work is supported by CSIR-UGC.\\
\noindent\textbf{Compliance of Ethical Standards:}\\
\noindent\textbf{Conflict of interest.} The authors declare that there is no conflict  of interest regarding the publication of this paper.\vspace{1.5mm}\\
\noindent\textbf{Data availability statement.}  Data sharing is not applicable to this article as no datasets were generated or analyzed during the current study.\vspace{1.5mm}\\
\noindent\textbf{Authors contributions.} Both the authors have made equal contributions in reading, writing, and preparing the manuscript.

\end{document}